\def\bC {\mathbf{C}}
\def\bN {\mathbf{N}}
\def\bQ {\mathbf{Q}}
\def\bR {\mathbf{R}}
\def\bZ {\mathbf{Z}}
\def\cF {\mathcal{F}}
\def\cN {\mathcal{N}}
\def\cO {\mathcal{O}}
\def\cP {\mathcal{P}}
\def\cS {\mathcal{S}}
\def\scrH{\mathscr{H}}
\def\scrL{\mathscr{L}}
\def\a {{\alpha}}
\def\b {{\beta}}
\def\de {{\delta}}
\def\eps {{\epsilon}}
\def\th {{\theta}}
\def\ka {{\kappa}}
\def\l {{\lambda}}
\def\om {{\omega}}
\def\Om {{\Omega}}
\def\d {{\partial}}
\def\grad {{\nabla}}
\def\Dlt {{\Delta}}
\def\rstr {{\big |}}
\def\indc {{\bf 1}}
\def\la {\langle}
\def\ra {\rangle}
\def \La {\bigg\langle}
\def \Ra {\bigg\rangle}
\newcommand{\Div}{\operatorname{div}}
\newcommand{\Sign}{\operatorname{sign}}
\newcommand{\Supp}{\operatorname{supp}}
\newcommand{\Det}{\operatorname{det}}
\newcommand{\Dist}{\operatorname{dist}}
\newcommand{\Id}{\operatorname{Id}}
\newcommand{\Lip}{\operatorname{Lip}}
\newcommand{\ba}{\begin{aligned}}
\newcommand{\ea}{\end{aligned}}
\newcommand{\be}{\begin{equation}}
\newcommand{\ee}{\end{equation}}
\newcommand{\lb}{\label}
\newtheorem{Thm}{Theorem}[section]
\newtheorem{Expl}[Thm]{Example}
\newtheorem{Cor}[Thm]{Corollary}
\newtheorem{Lem}[Thm]{Lemma}
\newtheorem{Def}[Thm]{Definition}
\begin{document}

\title[Monokinetic Measures with Rough Momentum Profiles]{Hamiltonian Evolution of Monokinetic Measures with Rough Momentum Profile}

\author[C. Bardos]{Claude Bardos}
\address[C.B.]{Universit\'e Paris-Diderot, Laboratoire J.-L. Lions, BP187, 4 place Jussieu, 75252 Paris Cedex 05 France}
\email{claude.bardos@gmail.com}

\author[F. Golse]{Fran\c cois Golse}
\address[F.G.]{Ecole Polytechnique, Centre de Math\'ematiques Laurent Schwartz (CMLS), 91128 Palaiseau Cedex, France}
\email{francois.golse@math.polytechnique.fr}

\author[P. Markowich]{Peter Markowich}
\address[P.M.]{King Abdullah University of Science and Technology (KAUST), MCSE Division, Thuwal 23955-6900, Saudi Arabia}
\email{Peter.Markowich@kaust.edu.sa}

\author[T. Paul]{Thierry Paul}
\address[T.P.]{CNRS and Ecole Polytechnique, Centre de Math\'ematiques Laurent Schwartz (CMLS), 91128 Palaiseau Cedex, France}
\email{thierry.paul@math.polytechnique.fr}

\begin{abstract}
Consider a monokinetic probability measure on the phase space $\bR^N_x\times\bR^N_\xi$, i.e. $\mu^{in}=\rho^{in}(x)\de(\xi-U^{in}(x))$ where $U^{in}$ is a 
vector field on $\bR^N$ and $\rho^{in}$ a probability density on $\bR^N$. Let $\Phi_t$ be a Hamiltonian flow on $\bR_N\times\bR^N$. In this paper, we study 
the structure of the transported measure $\mu(t):=\Phi_t\#\mu^{in}$ and of its integral in the $\xi$ variable denoted $\rho(t)$. In particular,  we give estimates 
on the number of folds in $\Phi_t(\hbox{ graph of $U^{in}$})$, on which $\mu_(t)$ is concentrated. We explain how our results can be applied to investigate the 
classical limit of the Schr\"odinger equation by using the formalism of Wigner measures. Our formalism includes initial momentum profiles $U^{in}$ with much 
lower regularity than required by the WKB method. Finally, we discuss a few examples showing that our results are sharp.
\end{abstract}

\keywords{Wigner measure, Liouville equation, Schr\"odinger equation, WKB method, Caustic, Area formula, Coarea formula }

\subjclass{81Q20, 81S30, 35Q40, 35L03, 28A75}

\maketitle


\section{Motivation}\lb{S-Motiv}


Consider the Cauchy problem for the Schr\"odinger equation in the case of a quantistic particle whose motion is driven by a potential $V\equiv V(x)\in\bR$:
\be\lb{ClassSchrod}
\left\{
\ba
{}&i\eps\d_t\psi_\eps=-\tfrac12\eps^2\Dlt_x\psi_\eps+V(x)\psi_\eps\,,\quad x\in\bR^N\,,\,\,t\in\bR\,,
\\
&\psi_\eps(0,x)=\psi^{in}(x)\,.
\ea
\right.
\ee
The unknown is the particle's wave function $\psi_\eps\equiv\psi_\eps(t,x)\in\bC$, while $\psi^{in}$ designates the initial datum. The dimensionless parameter 
$\eps>0$ is the ratio of the particle's de Broglie wave length (see \cite{Landau3} on p. 51) to some macroscopic observation length scale.

We choose the initial datum $\psi^{in}$ in the form of a WKB ansatz
\be\lb{ScalIn}
\psi^{in}(x):=a^{in}(x)e^{iS^{in}(x)/\eps}\,,
\ee
where $a^{in}$ and $S^{in}$ are real-valued, measurable functions on $\bR^N$ such that
$$
\int_{\bR^N}a^{in}(x)^2dx=1\,.
$$
The classical limit of quantum mechanics for a particle subject to the potential $V$ corresponds with the asymptotic behavior of the wave function $\psi_\eps$ 
as $\eps\to 0^+$ (see chapter VII in \cite{Landau3}).

Assume that $V\in C^\infty(\bR^N)$ is such that $-\tfrac12\eps^2\Dlt+V$ has a self-adjoint extension to $L^2(\bR^N)$ which is bounded from below, while
\be\lb{HypV}
V(x)=o(|x|)\hbox{ and }\d^\a V(x)=O(1)\hbox{ for each }\a\in\bN^N\hbox{ as }|x|\to\infty\,.
\ee
Let $\Phi_t $ be the Hamiltonian flow 
$$
(x,\xi)\mapsto\Phi_t(x,\xi)=(X_t(x,\xi),\Xi_t(x,\xi))
$$
defined by the function $H(x,\xi):=\tfrac12|\xi|^2+V(x)$ on the phase space $\bR^N_x\times\bR^N_\xi$ equipped with the canonical symplectic $2$-form 
$d\xi_1\wedge dx_1+\ldots+d\xi_N\wedge dx_N$. In other words, $t\mapsto(X_t(x,\xi),\Xi_t(x,\xi))$ is the integral curve of the Hamiltonian vector field
$(X,\Xi)\mapsto(\Xi,-\grad V(X))$ passing through $(x,\xi)$ at time $t=0$.

Assuming that $S^{in}\in C^2(\bR)$, consider the $C^1$ map
$$
F_t:\,\bR^N\ni y\mapsto X_t(y,\grad S^{in}(y))\in\bR^N
$$
for each $t\in\bR$,  and set $J_t(y):=|\Det(DF_t(y))|$. 

Denote by $C_t$ the set of critical values of $F_t$, which is Lebesgue negligible by Sard's theorem, and set $C:=\{(t,x)\,|\,x\in C_t\,,\,\,t\in\bR\}$.

Under assumption (\ref{HypV}) and provided that
\be\lb{HypS}
\grad S^{in}(y)=o(|y|)\hbox{ as }|y|\to\infty\,,
\ee
for each $t\in\bR$ and each $x\in\bR^N\setminus C_t$, the equation $F_t(y)=x$ has finitely many solutions denoted $y_j(t,x)$ for $j=1,\ldots,\cN(t,x)$.

The following result is based on global asymptotic methods developed by Maslov \cite{Maslov} and stated without proof as Theorem 5.1 in \cite{ArnoldMaslov}.

\begin{Thm}\lb{T-WKB}
Let $a^{in}\in C^m_c(\bR^N)$ and $S^{in}\in C^{m+1}(\bR^N)$ with $m>6N+4$. For all $\eps>0$ and all $(t,x)\in\bR_+\times\bR^N\setminus C$, set
\be\lb{WKBSol}
\Psi_\eps(t,x)\!:=\!\sum_{j=1}^{\cN(t,x)}\!\frac{a^{in}(y_j(t,x))}{J_t(y_j(t,x))^{1/2}}e^{iS_j(t,x)/\eps}e^{i\pi\nu_j(t,x)/2}\,,
\ee
where, for each $j=1,\ldots,\cN(t,x)$,
$$
S_j(t,x)\!:=\!S^{in}(y_j(t,x))\!+\!\!\int_0^t\left(\tfrac12|\Xi_s(y_j(t,x),\grad S^{in}(y_j(t,x)))|^2\!-\!V(F_s(y_j(t,x)))\right)ds
$$
and $\nu_j(t,x)\in\bZ$ is the Morse index of the path $[0,t]\ni s\mapsto F_s(y_j(t,x))\in\bR^N$. 

Then the solution $\psi_\eps$ of the Cauchy problem (\ref{ClassSchrod}) satisfies
\be\lb{WKBLimV}
\psi_\eps(t,x)=\Psi_\eps(t,x)+R_\eps^1(t,x)+R_\eps^2(t,x)
\ee
for all $T>0$, where
$$
\sup_{0\le t\le T}\|R_\eps^1\|_{L^2(B(0,R))}+\sup_{(t,x)\in K}|R_\eps^2(t,x)|=O(\eps)
$$
for each $R>0$ and each compact $K\subset(\bR\times\bR^N)\setminus C$.
\end{Thm}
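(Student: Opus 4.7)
The plan is to construct $\Psi_\eps$ as the image of $a^{in}$ under the Maslov canonical operator associated to the Lagrangian manifold $\L_t:=\Phi_t(\hbox{graph}(\grad S^{in}))\subset\bR^N_x\times\bR^N_\xi$, and then to bound $\psi_\eps-\Psi_\eps$ via a Duhamel stability argument. The initial graph is Lagrangian since $S^{in}$ is a local primitive; $\L_t$ is then a $C^m$ Lagrangian submanifold because $\Phi_t$ is a $C^\infty$ symplectomorphism under (\ref{HypV}). Assumption (\ref{HypS}) makes the projection $F_t$ proper on sub-level sets, so that for $x\in\bR^N\setminus C_t$ the preimage $\{y_1(t,x),\ldots,y_{\cN(t,x)}(t,x)\}$ is finite. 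On an open set where $F_t$ is a local diffeomorphism, the phase $S(t,x):=S^{in}(F_t^{-1}(x))+\int_0^t[\tfrac12|\Xi_s|^2-V(F_s)]\circ F_t^{-1}(x)\,ds$ satisfies $\d_tS+\tfrac12|\grad S|^2+V=0$ by direct computation along bicharacteristics, with $\grad_xS(t,x)=\Xi_t(F_t^{-1}(x),\grad S^{in}(F_t^{-1}(x)))$.

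Substituting $ae^{iS/\eps}$ into (\ref{ClassSchrod}), the $O(1)$ cancellation is Hamilton--Jacobi and the $O(\eps)$ cancellation is the transport equation $\d_ta+\grad S\cdot\grad a+\tfrac12a\Dlt S=0$, whose characteristic solution is exactly $a(t,x)=a^{in}(F_t^{-1}(x))J_t(F_t^{-1}(x))^{-1/2}$; the remaining residual $\tfrac12\eps^2(\Dlt a)e^{iS/\eps}$ is $O(\eps^2)$ pointwise. This local ansatz breaks down at caustics, where $J_t\to 0$. The remedy is the Maslov canonical operator: in a neighborhood of a critical point of $F_t$ one selects an index set $I\subset\{1,\ldots,N\}$ such that the mixed projection $(x_I,\xi_{I^c})$ is a local chart of $\L_t$, represents the quasimode as a semiclassical inverse Fourier transform in the $\xi_{I^c}$ variables of a WKB function with the corresponding generating function, and reads off its value in the original $x$-coordinates by stationary phase. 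Each transverse crossing of a simple caustic then contributes a phase shift of $\pi/2$; summed along the $j$-th bicharacteristic this reproduces $\pi\nu_j(t,x)/2$. Gluing the local representations by a partition of unity subordinate to an atlas of $\L_t$ produces (\ref{WKBSol}), the consistency of the construction modulo $O(\eps)$ under changes of $I$ being the cohomological content of Maslov's theory.

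Finally, I would bound $R_\eps^1$ in $L^2(B(0,R))$ by Duhamel combined with $L^2$-unitarity of the Schr\"odinger propagator, using the compact support of $a^{in}$ and the locally finite propagation of the classical flow under (\ref{HypV})--(\ref{HypS}) to localize the source term; and bound $R_\eps^2$ pointwise on a compact $K\subset(\bR\times\bR^N)\setminus C$ by stationary phase expansion of the canonical operator integral kernel, uniform on $K$ because $DF_t$ is uniformly invertible there. The main obstacle is the caustic handling: correctly identifying each caustic crossing with its $\pi/2$ phase shift and verifying coordinate-invariance of the canonical operator modulo $O(\eps)$. The regularity threshold $m>6N+4$ is precisely what is required to run a stationary phase expansion in up to $N$ fiber variables with enough spare derivatives for an $O(\eps)$ remainder uniformly on compacts, while also controlling the tails of the classical flow.
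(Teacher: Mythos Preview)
The paper does not prove Theorem~\ref{T-WKB}. It is quoted in Section~\ref{S-Motiv} purely as motivation, with the attribution ``based on global asymptotic methods developed by Maslov \cite{Maslov} and stated without proof as Theorem~5.1 in \cite{ArnoldMaslov}.'' There is therefore no proof in the paper to compare your proposal against.

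That said, your outline is the standard route: build the Maslov canonical operator on $\Lambda_t$, verify Hamilton--Jacobi and transport locally, switch to mixed $(x_I,\xi_{I^c})$ charts near caustics, pick up the Maslov index as accumulated $\pi/2$ phase shifts, and close with a Duhamel $L^2$-stability argument plus stationary phase on compacts away from $C$. This is exactly the machinery of \cite{Maslov} that the paper is invoking. Your sketch is a plan rather than a proof---the hard analytic work (uniform stationary-phase estimates near fold singularities, the cocycle condition for the canonical operator under chart changes, and the precise derivative count justifying $m>6N+4$) is gestured at but not carried out---but nothing in it is wrong as a roadmap.
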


Notice that each phase function $S_j$ is a solution of the Hamilton-Jacobi eikonal equation
$$
\d_tS_j(t,x)+\tfrac12|\grad_xS(t,x)|^2+V(x)=0
$$

Theorem \ref{T-WKB} shows that the single WKB ansatz (\ref{ScalIn}) evolves under the dynamics of the scaled Schr\"odinger equation (\ref{ClassSchrod})
into a wave function $\psi_\eps$ that is asymptotically close to a locally finite sum of WKB ansatz for $x\notin C_t$ as $\eps\to 0^+$.

\smallskip
There is a different approach to the classical limit of quantum mechanics, that is based on the notion of Wigner transform \cite{Wigner}. The Wigner transform
at scale $\eps>0$ of a complex valued function $\phi\in L^2(\bR^N)$ is
$$
W_\eps[\phi](x,\xi):=\tfrac1{(2\pi)^N}\int_{\bR^N}\phi(x+\tfrac12\eps y)\overline{\phi(x-\tfrac12\eps y)}e^{-i\xi\cdot y}dy
$$
Theorem IV.1 and Example III.5 in \cite{LionsPaul} are summarized in the following statement --- see also \cite{PG}.

\begin{Thm}\lb{T-Wigner} Let $a^{in}\in L^2(\bR^N)$ and $S^{in}\in W^{1,1}_{loc}(\bR^N)$ with $\|a^{in}\|_{L^2(\bR^N)}=1$. Then
$$
W_\eps\left[a^{in}e^{iS^{in}/\eps}\right](x,\cdot)\to|a^{in}(x)|^2\de_{\grad S^{in}(x)}\hbox{ in }\cS'(\bR^N_x\times\bR^N_\xi)
$$
as $\eps\to 0$. 

Assume that $V$ is such that $-\tfrac12\eps^2\Dlt+V$ has a self-adjoint extension to $L^2(\bR^N)$ which is bounded from below, and satisfies 
$$
V\in C^{1,1}(\bR^N)\quad\hbox{ and }V(x)\ge-C(1+|x|^2)
$$
for some constant $C>0$. Then the solution $\psi_\eps$ of the Cauchy problem (\ref{ClassSchrod}) with initial datum (\ref{ScalIn}) satisfies
$$
W_\eps[\psi_\eps(t,\cdot)]\to\mu(t)\quad\hbox{ in }\cS'(\bR^N\times\bR^N)\hbox{ uniformly in }t\in[0,T]
$$
for each $T>0$ as $\eps\to 0$, where $\mu\in C_b(\bR;w-\cP(\bR^N\times\bR^N))$ is the solution of the Cauchy problem for the Liouville equation of classical
mechanics
$$
\left\{
\ba{}
&\d_t\mu+\xi\cdot\grad_x\mu-\Div_\xi(\mu\grad_xV(x))=0\,,\quad x,\xi\in\bR^N\,,\,\,t\in\bR\,,
\\
&\mu(0,x,\cdot)=|a^{in}(x)|^2\de_{\grad S^{in}(x)}\,.
\ea
\right.
$$
\end{Thm}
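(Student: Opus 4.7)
The plan is to handle the two assertions separately.

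For the first assertion, I would test $W_\eps[a^{in}e^{iS^{in}/\eps}]$ against an arbitrary $\vphi\in\cS(\bR^N_x\times\bR^N_\xi)$. After a partial Fourier transform in $\xi$ the pairing becomes
\[
\frac{1}{(2\pi)^N}\int a^{in}(x+\tfrac{\eps y}{2})\overline{a^{in}(x-\tfrac{\eps y}{2})}\,e^{i(S^{in}(x+\tfrac{\eps y}{2})-S^{in}(x-\tfrac{\eps y}{2}))/\eps}\,\widehat{\vphi}(x,y)\,dxdy\,.
\]
Continuity of translations in $L^2$ makes the amplitude factor converge to $|a^{in}(x)|^2$ in $L^1_{loc}(dx)$. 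For the phase, the Lebesgue differentiation theorem applied to $\grad S^{in}\in L^1_{loc}$ gives convergence of $(S^{in}(x+\tfrac{\eps y}{2})-S^{in}(x-\tfrac{\eps y}{2}))/\eps$ to $y\cdot\grad S^{in}(x)$ for a.e.\ $x$. An approximation argument (mollifying $S^{in}$ and $a^{in}$, using the uniform $L^2$-continuity bound on the Wigner transform) then passes to the limit and identifies it as $\int\vphi(x,\grad S^{in}(x))|a^{in}(x)|^2dx$.

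For the second assertion I would carry out four steps. \textbf{Step 1 (Wigner equation).} Direct computation from \eqref{ClassSchrod} shows that $W_\eps(t,\cdot):=W_\eps[\psi_\eps(t,\cdot)]$ satisfies
\[
\d_tW_\eps+\xi\cdot\grad_xW_\eps=\cK_\eps[V]W_\eps\,,
\]
where $\cK_\eps[V]$ is the pseudodifferential operator with $y$-Fourier symbol $\tfrac{i}{\eps}(V(x+\tfrac{\eps y}{2})-V(x-\tfrac{\eps y}{2}))$. \textbf{Step 2 (Compactness).} Unitarity of the Schr\"odinger flow gives $\|\psi_\eps(t,\cdot)\|_{L^2}=1$, and hence $\{W_\eps(t,\cdot)\}$ is uniformly bounded in the dual of an appropriate symbol algebra. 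The Wigner equation combined with $V\in C^{1,1}$ delivers time-equicontinuity in a weak topology; an Ascoli-type argument then yields a subsequence $\eps_n\to 0$ such that $W_{\eps_n}(t,\cdot)\to\mu(t)$ in $\cS'$ uniformly for $t\in[0,T]$. \textbf{Step 3 (Limit equation).} Since $\grad V$ is Lipschitz, the symbol of $\cK_\eps[V]$ converges to $iy\cdot\grad V(x)$ locally uniformly with a uniform-in-$\eps$ Lipschitz control, so $\cK_\eps[V]W_{\eps_n}\to-\grad V(x)\cdot\grad_\xi\mu=\Div_\xi(\mu\grad V)$ in $\cS'$. Passing to the limit shows that $\mu$ solves the Liouville equation in the sense of distributions. \textbf{Step 4 (Identification and uniqueness).} The first assertion identifies $\mu(0,x,\cdot)=|a^{in}(x)|^2\de_{\grad S^{in}(x)}$; since $\grad V$ is locally Lipschitz the Hamiltonian flow $\Phi_t$ is globally well-defined on $\bR^N\times\bR^N$, and the unique measure solution of the Liouville equation with that initial datum is the pushforward $\mu(t)=\Phi_t\#\mu(0)$. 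Uniqueness of the limit forces convergence of the whole family as $\eps\to 0$.

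The main obstacle is Step 3: one must pass to the limit in $\cK_\eps[V]W_\eps$ when only two derivatives of $V$ are bounded. The hypothesis $V\in C^{1,1}$ is precisely sharp for the Taylor-type control of the Fourier multiplier $\tfrac{i}{\eps}(V(x+\tfrac{\eps y}{2})-V(x-\tfrac{\eps y}{2}))$, while the growth assumption $V(x)\ge-C(1+|x|^2)$ is needed to rule out escape of mass and guarantee that the limiting measure $\mu(t)$ has total mass equal to $1$, so that no loss at infinity spoils the identification of the limit with $\Phi_t\#\mu(0)$.
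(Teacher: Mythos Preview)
The paper does not supply its own proof of this theorem: it is stated explicitly as a summary of Theorem IV.1 and Example III.5 in Lions--Paul \cite{LionsPaul} (see also \cite{PG}), and is used as an input rather than established in the paper. Your outline is essentially the strategy of Lions--Paul: identification of the Wigner limit of a WKB state via the partial Fourier transform and an approximation argument, then the Wigner equation, compactness from the $L^2$ bound, passage to the limit in the potential term using $V\in C^{1,1}$, and uniqueness of the Liouville solution by push-forward along the Hamiltonian flow. One small imprecision: you invoke ``$\grad V$ locally Lipschitz'' for global well-posedness of $\Phi_t$, but global existence of trajectories actually uses that $\grad V$ is globally Lipschitz (this is what $V\in C^{1,1}$ gives) together with the lower bound $V(x)\ge -C(1+|x|^2)$; local Lipschitz alone would not suffice.
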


The initial datum in the Cauchy problem for the Liouville equation 
$$
\mu^{in}(x,\cdot):=a^{in}(x)^2\de_{\grad S^{in}(x)}
$$ 
is an example of a ``monokinetic measure'' on the phase space $\bR^N_x\times\bR^N_\xi$. According to Theorem \ref{T-Wigner}, the Wigner transform of the 
solution of the Cauchy problem for the scaled Schr\"odinger equation (\ref{ClassSchrod}) with WKB initial data (\ref{ScalIn}) converges as $\eps\to 0^+$ to 
$\mu(t)=\Phi_t\#\mu^{in}$, the initial monokinetic measure transported along trajectories of classical mechanics.

Comparing the assumptions in Theorems \ref{T-WKB} and \ref{T-Wigner} shows that the approach of the classical limit of quantum mechanics based on the
Wigner transform requires much less regularity on both the potential $V$ and the initial amplitude $a^{in}$ and phase $S^{in}$. On the other hand, Theorem
\ref{T-WKB} provides detailed information on the structure of the evolved wave function $\psi_\eps(t)$ for all $t\in\bR$. This suggests the following question.

\smallskip
\noindent
\textbf{Problem A.} To find the structure of $\mu(t)=\Phi_t\#\mu^{in}$, where $\mu^{in}$ is a monokinetic measure and $\Phi_t$ a Hamiltonian flow on the phase 
space on $\bR^N_x\times\bR^N_\xi$.  For instance, is $\mu(t) $ a locally finite sum of monokinetic measures?

\smallskip
This last question is suggested by the following observation. According to Proposition 1.5 in \cite{GMMP} and in view of Example III.5 in \cite{LionsPaul}, for all 
$a_1,\ldots,a_n\in L^2(\bR^N)$ and all $S_1,\ldots,S_n\in W^{1,1}_{loc}(\bR^N)$ such that $\grad S_1(x),\ldots,\grad_xS_n(x)$ are linearly independent in 
$\bR^N$ for a.e. $x\in\bR^N$, one has
\be\lb{SumWigner}
W_\eps\left[\sum_{k=1}^na_ke^{iS_k/\eps}\right](x,\cdot)\to\sum_{k=1}^na_k(x)^2\de_{\grad S_j(x)}\hbox{ in }\cS'(\bR^N\times\bR^N)\hbox{  as }\eps\to 0\,.
\ee
However, one should not seek to apply Theorem \ref{T-WKB} and the observation above to answer Problem A, since the locally finite sum $\Psi_\eps(t,\cdot)$ 
of WKB ansatz is known to approximate the solution $\psi_\eps(t,\cdot)$ of the Cauchy problem (\ref{ClassSchrod}) away from the set $C_t$ only and not 
globally in the Euclidean space $\bR^N$. 

A preliminary, more formal study of the structure of Wigner measures evolving from monokinetic measures associated to initial WKB wave functions can be 
found in \cite{SparMM}. The main purpose of the present study is to go further in this direction, and especially to answer the problems posed in this introduction 
under less stringent regularity assumptions than those used in Theorem \ref{T-WKB}. 

\smallskip
A related question, bearing on the role of the set $C_t$ in Theorem \ref{T-WKB}, is the following problem.

\smallskip
\noindent
\textbf{Problem B.} To find the structure of the first marginal $\rho(t)$ of the probability measure $\mu(t)$ on $\bR^N_x\times\bR^N_\xi$. 

\smallskip
Define $\Lambda^{in}:=\{(x,\grad S^{in}(x))\,|\,x\in\bR^N\}$ and $\Lambda_t:=\Phi_t(\Lambda^{in})$. Under the same assumptions as in Theorem \ref{T-WKB}, 
by definition of the functions $\cN(t,x)$ and $y_j(t,x)$ for $j=1,\ldots,\cN(t,x)$, one has
\be\lb{Lambdat=}
\Lambda_t\cap((\bR^N\setminus C_t)\times\bR^N)=\bigcup_{j\ge 1}\{(y_j(t,x),\grad S^{in}(y_j(t,x)))\,|\,x\hbox{ is s.t. }\cN(t,x)\ge j\}\,.
\ee
In other words, the number of WKB terms in the asymptotic formula $\Psi_\eps$ for the solution $\psi_\eps$ of the Cauchy problem (\ref{ClassSchrod}) in
quantum mechanics with initial datum (\ref{ScalIn}) has a geometric interpretation in terms of the classical Hamiltonian $\Phi_t$. The previous equality shows
that the function $\cN$ measures the number of folds in $\Lambda_t$.  Notice that $\Lambda_t$ can be defined for all $t\in\bR$ under assumptions much 
weaker than those in Theorem \ref{T-WKB}. (For instance, the Hamiltonian flow $\Phi_t$ is a continuous map on $\bR^N_x\times\bR^N_\xi$ defined for all 
$t\in\bR$ under the assumption on $V$ in Theorem \ref{T-Wigner}, while $\Lambda^{in}$ is the graph of a continuous map for $S^{in}\in C^1(\bR^N)$.) This 
observation suggests the following question.

\smallskip
\noindent
\textbf{Problem C.} To estimate the number of folds in $\Lambda_t$ for all $t\in\bR$.

\smallskip
Notice that these problems bear exclusively on the propagation of monokinetic measures under the dynamics defined by the Liouville equation of classical 
mechanics. Problems A-C can be formulated without any reference to the classical limit of quantum mechanics --- which nevertheless remains the main
motivation for studying these problems here.

As noticed above, the regularity assumptions on $a^{in},S^{in}$ and $V$ used in Theorem \ref{T-Wigner} are much weaker than those in the WKB method 
summarized in Theorem \ref{T-WKB}. This suggests the idea of using the answers to problems A-C to study the classical limit of quantum mechanics in cases
where the assumptions of Theorem \ref{T-WKB}Êare not satisfied.

\smallskip
\noindent
\textbf{Problem D.} To study the propagation under the scaled Schr\"odinger equation (\ref{ClassSchrod}) of a WKB ansatz (\ref{ScalIn}) in the classical limit
$\eps\to 0^+$ under regularity assumptions on $a^{in},S^{in}$ and $V$ weaker than  the level of regularity assumed in Theorem \ref{T-WKB}. 

\smallskip
The paper is organized as follows: our main results are stated in sections \ref{S-Main1} and \ref{S-Expl}. Section \ref{S-Main1} contains our answers to
Problems A-D stated above. Section \ref{S-Expl} presents examples and counterexamples showing that the theorems in section \ref{S-Main1} are sharp.
From the quantum mechanical point of view, these examples and counterexamples correspond to wave functions whose Wigner measures are carried 
by highly singular subsets of the phase space. The proofs of all the results in sections \ref{S-Main1} are given in sections \ref{S-N}-\ref{S-Psi}, while the 
proofs of the statements in section \ref{S-Expl} are deferred to section \ref{S-ProofEx}. 


\section{Main results I: Answering Problems A-D}\lb{S-Main1}


\subsection{Assumptions on the dynamics and on the initial data}\lb{SS-DynInit}

First we specify the assumptions on the classical Hamiltonian dynamics used throughout the present paper. 

Although the motivation of our study is the classical limit of the scaled Schr\"odin\-ger operator $-\tfrac12\eps^2\Dlt+V$, it would have been equally legitimate to 
raise the same questions in the case of the operator $H(x,-i\eps\d_x)$ obtained by the Weyl quantization rule from a smooth Hamiltonian $H\equiv H(x,\xi)$ 
defined on $\bR^N\times\bR^N$. A first obvious condition to be imposed on the function $H$ in this case is that the resulting operator $H(x,-i\eps\d_x)$ admits 
a self-adjoint extension to $L^2(\bR^N)$ for each $\eps>0$. Therefore we do not restrict our attention to the only case where $H(x,\xi)=\tfrac12|\xi|^2+V(x)$.

Throughout the present paper, we assume that $H\equiv H(x,\xi)\in\bR$ is a $C^2$ function on $\bR^N\times\bR^N$ satisfying the following conditions: there 
exists $\ka>0$, and a function $h\in C(\bR;\bR_+)$ sublinear at infinity, i.e.
$$
\frac{h(r)}{r}\to0\quad\hbox{ as }r\to+\infty\,,
$$
such that
\be\lb{HypH}
\ba
{}&|\grad_\xi H(x,\xi)|\le\ka(1+|\xi|)
\\
&|\grad_xH(x,\xi)|\le h(|x|)+\ka|\xi|
\\
&|\grad^2H(x,\xi)|\le\ka
\ea
\ee
for all $(x,\xi)\in\bR^N\times\bR^N$.

By the Cauchy-Lipschitz theorem, the Hamiltonian $H$ generates a unique, global Hamiltonian flow denoted $\Phi_t$ on $\bR^N_x\times\bR^N_\xi$ for the 
canonical symplectic $2$-form $d\xi_1\wedge dx_1+\ldots+d\xi_N\wedge dx_N$. In other words, for each $(x,\xi)\in\bR^N\times\bR^N$, the integral curve
of the Hamiltonian vector field $(D_xH,-D_\xi H)$ passing through $(x,\xi)$ for $t=0$ is $t\mapsto\Phi_t(x,\xi)$. We systematically use the following notation for 
the flow $\Phi_t$:
\be\lb{DefPhi}
\Phi_t(x,\xi)=(X_t(x,\xi),\Xi_t(x,\xi))\,,\quad x,\xi\in\bR^N\,,\,\,\quad t\in\bR\,,
\ee

\smallskip
Next we specify the assumptions on the initial data. 

\begin{Def}\lb{D-Monokin}
The monokinetic measure on $\bR^N_x\times\bR^N_\xi$ with momentum profile $U$ and density $\rho$ is the positive Borel measure $\mu$ whose
 disintegration with respect to the Lebesgue measure on $\bR^N_x$ and the canonical projection $(x,\xi)\mapsto x$ is
$$
\mu(x,\cdot):=\rho(x)\de_{U(x)}\,.
$$
Here $U$ is a continuous vector field on $\bR^N$ and $\rho$ a nonnegative element of $L^1(\bR^N)$. In other words, for each test function 
$\phi\in C_c(\bR^N_x\times\bR^N_\xi)$, one has
$$
\iint_{\bR^N\times\bR^N}\phi(x,\xi)\mu(dxd\xi)=\int_{\bR^N}\phi(x,U^{in}(x))\rho(x)dx\,.
$$
\end{Def}

Notice that, at variance with the case considered in section \ref{S-Motiv}, we treat the case of monokinetic measures where the $\xi$-profile $U^{in}$ is not 
necessarily a gradient field.

\smallskip
Let $U^{in}\in C(\bR^N;\bR^N)$ satisfy the following sublinearity condition at infinity
\be\lb{Sublin}
\frac{|U^{in}(y)|}{|y|}\to 0\quad\hbox{ as }|y|\to 0\,.
\ee
Unless otherwise specified, we assume that its gradient (in the sense of distributions) $DU^{in}$ satisfies the condition
\be\lb{CondLN1}
\d_lU^{in}_k\rstr_{\Om}\in L^{N,1}(\Om)\hbox{ for each bounded open }\Om\subset\bR^N\,,
\ee
for all $k,l=1,\ldots,N$. We recall that a measurable function $f:\,\Om\to\bR$ belongs to the Lorentz space $L^{N,1}(\Om)$ if
$$
\int_0^\infty\left(\scrL^N(\{x\in\Om\,|\,|f(x)|\ge\l\})\right)^{1/N}d\l<\infty\,.
$$
By Theorem B in \cite{KKM}, the vector field $U^{in}$ is differentiable a.e. on $\bR^N$. We henceforth denote by $E$ the $\scrL^N$-negligible set defined as
\be\lb{DefE}
E:=\{y\in\bR^N\,|\,U^{in}\hbox{ is not differentiable at }y\}\,.
\ee

Along with the vector field $U^{in}$, we consider the map
\be\lb{DefF}
F_t:\,\bR^N\ni y\mapsto F_t(y):=X_t(y,U^{in}(y))\in\bR^N\,.
\ee
By the chain rule, for each $t\in\bR$, the map $F_t$ is differentiable on $\bR^N\setminus E$. We henceforth use the following elements of notation
\be\lb{DefJ}
J_t(y):=|\Det(DF_t(y))|\hbox{ for all }y\in\bR^N\setminus E\hbox{ and }t\in\bR\,,
\ee
and
\be\lb{DefZP}
\ba
P_t:=\{y\in\bR^N\setminus E\hbox{ s.t. }J_t(y)>0\}\,,
\\
Z_t:=\{y\in\bR^N\setminus E\hbox{ s.t. }J_t(y)=0\}\,.
\ea
\ee
We generalize as follows the definition of the set $C_t$ considered in section \ref{S-Motiv}:
\be\lb{DefCt}
C_t:=\{x\in\bR^N\hbox{ s.t. }F_t^{-1}(\{x\})\cap(Z_t\cup E)\not=\varnothing\}\,,
\ee
and define
\be\lb{DefC}
C:=\{(t,x)\in\bR\times\bR^N\hbox{ s.t. }x\in C_t\}\,.
\ee
The sets $C$ and $C_t$ are referred to as ``the caustic'' and ``the caustic fiber'' respectively.

Finally, we designate by $\rho^{in}$ a probability density on $\bR^N$, and we consider the monokinetic measure $\mu^{in}$ defined by 
\be\lb{DefMuin}
\mu^{in}(x,\cdot)=\rho^{in}(x)\de_{U^{in}(x)}\,.
\ee
Our purpose is to study the structure of 
\be\lb{DefMut}
\mu(t):=\Phi_t\#\mu^{in}\quad\hbox{ and }\rho(t):=\Pi\#\mu(t)\,,
\ee
where $\Pi$ is the canonical projection $\bR^N\times\bR^N\ni(x,\xi)\mapsto x\in\bR^N$. 

A few remarks on the definition (\ref{DefCt}) are in order. 

First, if $U^{in}$ satisfies (\ref{CondLN1}) but is not of class $C^1$, the equation $F_t(y)=x$ may have infinitely many solutions although $x$ is not a singular value 
of $F_t$, as shown by the example below.

\begin{Expl}
Set $N=1$ and $H(x,\xi)=\tfrac12\xi^2$, so that $F_t(y)=y+tU^{in}(y)$. Take 
$$
U^{in}(z):=z\sin(\ln|z|)\hbox{ if }z\not=0\,,\quad\hbox{while }U^{in}(0)=0\,.
$$
so that $U^{in}\in\Lip(\bR)\setminus C^1(\bR)$ and the nondifferentiability set $E=\{0\}$. Then for each $t$ such that $|t|>1$, the set $F_t^{-1}(\{0\})\cap[-L,L]$ 
is infinite for each $L>0$. Assume for instance that $t<-1$ while $L=\pi$; then 
$$
F_t^{-1}(\{0\})\cap[-\pi,\pi]=\{0\}\cup\{\pm y_n(t)\,|\,n\ge 0\}\cup\{\pm z_n(t)\,|\,n\ge 0\}\,,
$$
where
$$
y_n(t):=\exp(\arcsin(-1/t)-2\pi n)\hbox{ and }z_n(t):=\exp(\pi-\arcsin(-1/t)-2\pi n)\,,
$$
for $n\in\bN$. On the other hand $F_t(y)=1+t\sin\ln|y|+t\cos\ln|y|$ so that 
$$
|F'_t(y_n(t))|=|F'_t(z_n(t))|=\sqrt{t^2-1}\not=0\,.
$$
Hence $0$ is not a critical value of the restriction of $F_t$ to $[-\tfrac\pi2,\tfrac\pi2]$.
\end{Expl}

In view of the observation following (\ref{HypS}), this example suggests that the definition of the caustic fiber for non $C^1$ momentum profiles $U^{in}$ should 
indeed involve the nondifferentiability set $E$, as in (\ref{DefCt}).

Yet there is a definite arbitrariness in the definition of $C_t$. For instance, the nondifferentiability set $E$ could be replaced by any other Lebesgue negligible 
set $E'$ in the definition of $C_t$. Since the initial density $\rho^{in}\in L^1(\bR^N)$, the initial monokinetic measure $\mu^{in}$ is obviously independent 
of $E'$. By uniqueness of the solution of the Cauchy problem for the Liouville equation, the propagated measure $\mu(t)$ and its first marginal $\rho(t)$  are 
also independent of $E'$. Consistently with this observation, we have chosen the minimal regularity requirements on the initial momentum profile $U^{in}$ 
leading to a precise description of the structure of $\rho(t)$ on $\bR^N\setminus C_t$ and for which $C_t$ is Lebesgue negligible independently of the choice
of the Lebesgue negligible set $E'$ used in its definition. We refer to Example \ref{E-CantorFunc} and the discussion thereafter for a further analysis of this
issue. 

Along with the probability measures $\mu^{in}$ and $\mu(t)$, we consider the sets
\be\lb{DefLinLt}
\Lambda^{in}:=\{(x,U^{in}(x))\,|\,x\in\bR^N\}\,,\quad\hbox{ and }\Lambda_t:=\Phi_t(\Lambda^{in})\,,\quad t\in\bR\,.
\ee

In the context of the classical limit of quantum mechanics as in Theorem \ref{T-WKB}-\ref{T-Wigner}, the initial momentum profile $U^{in}=\grad S^{in}$ is a 
gradient field with $S^{in}$ of class $C^2$ at least, so that $\Lambda_t$ is a Lagrangian submanifold of the phase space for all $t\in\bR$. However, the 
fact that $\Lambda_t$ is a Lagrangian does not play any particular role in our analysis, as all our results hold without assuming that $U^{in}$ is a gradient 
field.

\subsection{Answering problem C: the equation $F_t(y)=x$ and the set $\Lambda_t$}

The definition of the asymptotic solution in Theorem \ref{T-WKB} shows the importance of the equation $F_t(y)=x$ for the unknown $y$, and especially of 
the number of solutions of this equation whenever this number is finite. For each $t\in\bR$, each $x\in\bR^N$ and each $R>0$, set
\be\lb{DefNNR}
\cN(t,x):=\#F_t^{-1}(\{x\})\quad\hbox{ and }\cN_R(t,x):=\#F_t^{-1}(\{x\})\cap\overline{B(0,R)}
\ee
whenever these sets are finite, and $\cN(t,x)=+\infty$ or $\cN_R(t,x)=+\infty$ otherwise.

\smallskip
Our first main result in the present paper is the following theorem, which solves Problem C and provides additional information on the equation $F_t(y)=x$.
The key new pieces of information are the estimates in statements (d) and (f) below.

\begin{Thm}\lb{T-NURough}
Assume that $U^{in}$ is a continuous vector field on $\bR^N$ satisfying (\ref{Sublin}).

\smallskip
\noindent
a) For each $t\in\bR$, the map $F_t$ is proper and onto, and satisfies the condition
$$
\sup_{|t|\le T}|F_t(y)-y|=o(|y|)\quad\hbox{ as }|y|\to\infty\,.
$$

\smallskip
Assume moreover that $U^{in}$ satisfies (\ref{CondLN1}). 

\smallskip
\noindent
b) For each $t\in\bR$, the function $J_t$ belongs to $L^1_{loc}(\bR^N)$.

\noindent
c) For each $t\in\bR$, the set $F_t^{-1}(\{x\})$ is finite for a.e. $x\in\bR^N$.

\noindent
d) For each $t\in\bR$, each $R>0$ and each $n\in\bN$, one has
$$
\scrL^N\left(\{x\in\bR^N\hbox{ s.t. }\cN_R(t,x)\ge n\}\right)\le\frac1ne^{N\ka|t|}\|1+|DU^{in}|\,\|^N_{L^N(B(0,R))}\,.
$$
e) For each $t\in\bR$, the set $C_t$ defined in (\ref{DefCt}) satisfies $\scrL^N(C_t)=0$.

\noindent
f) For each $T>0$, 
$$
\scrH^1(\{(t,y)\!\in\![-T,T]\!\times\!\bR^N\hbox{ s.t. }F_t(y)\!=\!x\})\!<\!+\infty\quad\hbox{ for a.e. }x\in\bR^N\,.
$$
\end{Thm}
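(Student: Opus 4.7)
The plan is to handle the six statements roughly in order, but with (b)--(d) forming the central block based on the area formula, and (e)--(f) building on it.

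For (a), I would start from the Hamiltonian ODE bounds in \eqref{HypH}. A standard Gronwall argument, applied to $|\Xi_s(y,U^{in}(y))|$ and $|X_s(y,U^{in}(y))-y|$, shows that both quantities stay bounded on $[-T,T]$ by a function of $(|y|,|U^{in}(y)|,h(|X_s|))$, all of which are sublinear in $|y|$ thanks to \eqref{Sublin} and the sublinearity of $h$. This gives $\sup_{|t|\le T}|F_t(y)-y|=o(|y|)$ directly; properness follows because the preimage of a bounded set is forced to be bounded by the reverse inequality $|F_t(y)|\ge\tfrac12|y|$ for $|y|$ large, and surjectivity comes from Brouwer degree theory: the homotopy $s\mapsto F_{st}$ consists of proper maps (uniform sublinearity estimate), so $\deg(F_t)=\deg(\mathrm{Id})=1$.

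For (b)--(d) the key input is the chain rule: $DF_t(y) = D_xX_t(y,U^{in}(y)) + D_\xi X_t(y,U^{in}(y))\cdot DU^{in}(y)$. The Hessian bound $|\grad^2H|\le\ka$ combined with Gronwall applied to the tangent flow gives $\|D\Phi_t\|_{\mathrm{op}}\le e^{\ka|t|}$; therefore $|DF_t(y)|\le e^{\ka|t|}(1+|DU^{in}(y)|)$ and $J_t(y)\le e^{N\ka|t|}(1+|DU^{in}(y)|)^N$, which lies in $L^1_{\mathrm{loc}}$ because the Lorentz embedding $L^{N,1}_{\mathrm{loc}}\hookrightarrow L^N_{\mathrm{loc}}$ applied to \eqref{CondLN1} gives $|DU^{in}|^N\in L^1_{\mathrm{loc}}$; this gives (b). For (c) and (d), I invoke Theorem B of Kauhanen--Koskela--Mal\'y: maps with distributional gradient in $L^{N,1}_{\mathrm{loc}}$ are differentiable a.e., satisfy Luzin's $(N)$ condition, and hence satisfy the classical area formula. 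Since the smooth flow $\Phi_t$ preserves these properties, the same holds for $F_t$. Applying the area formula on $B(0,R)$ gives
\[
\int_{\bR^N}\cN_R(t,x)\,dx=\int_{B(0,R)}J_t(y)\,dy\le e^{N\ka|t|}\|1+|DU^{in}|\|^N_{L^N(B(0,R))},
\]
from which (c) follows immediately and (d) by Chebyshev: $n\,\scrL^N(\{\cN_R(t,\cdot)\ge n\})\le\int\cN_R\,dx$.

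Statement (e) reduces to $\scrL^N(F_t(Z_t))=0$ and $\scrL^N(F_t(E))=0$. The first is another application of the area formula: $\int\#(F_t^{-1}(x)\cap Z_t)\,dx=\int_{Z_t}J_t\,dy=0$, so $F_t^{-1}(x)\cap Z_t=\varnothing$ for a.e.\ $x$. The second is precisely the Luzin $(N)$ property of $F_t$ applied to the $\scrL^N$-null set $E$, again via the KKM theorem.

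For (f), the method is the coarea formula for $\Psi:[-T,T]\times\bR^N\to\bR^N$, $\Psi(t,y)=F_t(y)$. By Cauchy--Binet, $J\Psi=\sqrt{\det(D\Psi\,D\Psi^\top)}$ is dominated by $\sqrt{N+1}\max_S|\det M_S|$, where one minor is $J_t$ and the remaining $N$ minors are bounded by $|\partial_tF_t|\cdot|D_yF_t|^{N-1}$. Since $\partial_tF_t(y)=\grad_\xi H(\Phi_t(y,U^{in}(y)))$ is bounded on $[-T,T]\times B(0,R)$ by the Gronwall estimates of (a), and $|D_yF_t|^{N-1}\in L^{N/(N-1)}_{\mathrm{loc}}\subset L^1_{\mathrm{loc}}$ from (b), we get $J\Psi\in L^1([-T,T]\times B(0,R))$. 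The coarea formula then yields
\[
\int_{\bR^N}\scrH^1\bigl(\Psi^{-1}(x)\cap[-T,T]\times B(0,R)\bigr)\,dx<+\infty,
\]
so this $\scrH^1$-measure is finite for a.e.\ $x$. To replace $B(0,R)$ by $\bR^N$ I use (a) again: the uniform sublinearity of $F_t-\mathrm{Id}$ on $[-T,T]$ forces $\Psi^{-1}(x)\subset[-T,T]\times B(0,R(|x|,T))$ for a radius $R(|x|,T)$ depending only on $|x|$ and $T$, so we apply the bound with a countable exhaustion $R\in\bN$ and discard the union of null sets.

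The main obstacle is justifying the area and coarea formulas under the borderline Lorentz regularity $DU^{in}\in L^{N,1}_{\mathrm{loc}}$ rather than the more familiar Lipschitz or $W^{1,p}$, $p>N$, setting. This is exactly where Theorem B of Kauhanen--Koskela--Mal\'y is indispensable: it supplies a.e.\ differentiability and Luzin $(N)$, and these two ingredients are sufficient for the classical versions of both formulas to apply to $F_t$ and to $\Psi$, the latter being affine in $t$ over the differentiability set of $U^{in}$.
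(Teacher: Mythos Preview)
Your proposal is correct and follows essentially the same route as the paper: Gronwall estimates on the flow for (a), the chain rule combined with $\|D\Phi_t\|\le e^{\kappa|t|}$ and the area formula (via the Kauhanen--Koskela--Mal\'y result) for (b)--(e), Chebyshev for (d), and the coarea formula applied to $(t,y)\mapsto F_t(y)$ for (f), followed by a countable exhaustion. Two small remarks: in (e) the paper treats $Z_t\cup E$ in a single application of the area formula (the integral of $J_t$ over this set vanishes since $E$ is null), whereas you split off $F_t(E)$ and invoke Luzin $(N)$ separately---both are fine; and your closing comment that $\Psi$ is ``affine in $t$'' is only true for the free flow, not for a general Hamiltonian, but this is irrelevant since what you actually need (and have) is that $\partial_t\Psi=\nabla_\xi H(\Phi_t(y,U^{in}(y)))$ is continuous and locally bounded.
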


\smallskip
The estimate in statement (d) bears on the localized variant of the counting function $\cN_R$ instead of $\cN$. With the additional information in statement (a),
one can show that $\cN(t,x)=\cN_R(t,x)$ for $R$ large enough, by the following observation.

\begin{Cor}\lb{C-NURough}
Assume that $U^{in}$ is a continuous vector field on $\bR^N$ satisfying (\ref{Sublin}) and (\ref{CondLN1}), and define
$$
M_T(R):=\sup_{|y|\ge R}\sup_{|t|\le T}\frac{|F_t(y)-y|}{|y|}\quad\hbox{ for all }T,R>0\,.
$$
Let $T>0$ and $R_T^*>0$ be s.t. $M_T(R_T^*)<\tfrac12$. 

\smallskip
\noindent
a) For each $t\in[-T,T]$ and each $R\ge R_T^*$, one has
$$
\cN(t,x)=\cN_R(t,x)\hbox{ for each }x\in\overline{B(0,\tfrac12R)}\,.
$$
b) For each $t\in[-T,T]$ and each $R\ge R_T^*$, one has 
$$
\scrL^N\left(\{x\in\overline{B(0,\tfrac12R)}\hbox{ s.t. }\cN(t,x)\ge n\}\right)\le\frac1ne^{N\ka|t|}\|1+|DU^{in}|\,\|^N_{L^N(B(0,R))}\,.
$$
\end{Cor}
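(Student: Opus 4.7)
The proof is essentially a book-keeping argument on top of Theorem \ref{T-NURough}; the decisive ingredient is the uniform sublinear estimate $\sup_{|t|\le T}|F_t(y)-y|=o(|y|)$ as $|y|\to\infty$ provided by Theorem \ref{T-NURough}(a). First I would observe that this estimate guarantees both that $M_T(R)<\infty$ for every $R>0$ and that $M_T(R)\to 0$ as $R\to\infty$; since $M_T$ is manifestly nonincreasing in $R$ (the supremum is taken over a smaller set), the threshold $R_T^*$ exists and, moreover, $M_T(R)\le M_T(R_T^*)<\tfrac12$ for every $R\ge R_T^*$. This monotonicity is what allows the single constant $R_T^*$ to control all larger radii uniformly.

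The core of part (a) is the following localization lemma: if $R\ge R_T^*$, $|t|\le T$ and $|y|\ge R$, then by the reverse triangle inequality and the definition of $M_T$,
\[
|F_t(y)|\ge |y|-|F_t(y)-y|\ge |y|(1-M_T(R))>\tfrac12 |y|\ge \tfrac12 R.
\]
Contrapositively, any $y\in F_t^{-1}(\{x\})$ with $|x|\le \tfrac12 R$ must satisfy $|y|<R$, hence $F_t^{-1}(\{x\})\subset B(0,R)\subset \overline{B(0,R)}$. Therefore $F_t^{-1}(\{x\})=F_t^{-1}(\{x\})\cap\overline{B(0,R)}$, which is exactly the identity $\cN(t,x)=\cN_R(t,x)$ claimed in (a).

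Part (b) is then immediate: by (a), for each $t\in[-T,T]$ and each $R\ge R_T^*$ one has the set inclusion
\[
\{x\in\overline{B(0,\tfrac12R)}\text{ s.t. }\cN(t,x)\ge n\}=\{x\in\overline{B(0,\tfrac12R)}\text{ s.t. }\cN_R(t,x)\ge n\}\subset\{x\in\bR^N\text{ s.t. }\cN_R(t,x)\ge n\},
\]
so the Lebesgue measure bound from Theorem \ref{T-NURough}(d) passes through verbatim by monotonicity of $\scrL^N$.

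There is no substantial obstacle here; the only point that deserves care is to verify that the inequality $M_T(R)<\tfrac12$ propagates to all $R\ge R_T^*$ (handled by monotonicity in $R$) and that the chain $|F_t(y)|>\tfrac12|y|\ge \tfrac12 R$ is strict, so that the preimage of a point in the \emph{closed} ball $\overline{B(0,\tfrac12 R)}$ cannot escape the open ball $B(0,R)$; beyond this elementary calibration, the statement is a direct consequence of Theorem \ref{T-NURough}(a) and (d).
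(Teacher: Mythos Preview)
Your proof is correct and follows essentially the same argument as the paper's: both use the monotonicity of $M_T$ together with Theorem \ref{T-NURough}(a) to show that $|y|\ge R$ forces $|F_t(y)|>\tfrac12 R$, hence $F_t^{-1}(\{x\})\subset\overline{B(0,R)}$ for $x\in\overline{B(0,\tfrac12 R)}$, and then invoke Theorem \ref{T-NURough}(d) for part (b). If anything, your handling of the strict inequality $M_T(R)\le M_T(R_T^*)<\tfrac12$ is slightly more careful than the paper's, which writes $M_T(R)\le\tfrac12$ and obtains only $|F_t(y)|\ge\tfrac12|y|$.
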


\smallskip
If the regularity condition (\ref{CondLN1}) is replaced by the assumption that $U^{in}$ is of class $C^1$, one obtains additional information on the equation 
$F_t(y)=x$, especially on the set of $x$'s for which this equation has finitely many solutions, on the number of its solutions, and on the dependence of these 
solutions in $t$ and $x$. This additional information has been gathered in the next theorem for the sake of completeness. While statements (a-d) are more 
or less classical consequences of the implicit function theorem, we believe that statement (e) is new.

\begin{Thm}\lb{T-NUC1}
Assume that $U^{in}$ is a $C^1$ vector field on $\bR^N$ satisfying (\ref{Sublin}) and the condition
$$
|DU^{in}(y)|=O(|y|)\quad\hbox{ as }|y|\to\infty\,.
$$
a) The set $C$ is closed in $\bR\times\bR^N$.

\noindent
b) The set $F_t^{-1}(\{x\})$ is finite for all $(t,x)\in\bR\times\bR^N\setminus C$, and the counting function $\cN$ is constant in each connected component of 
$\bR\times\bR^N\setminus C$.

\noindent
c) Let $O_n$ be a connected component of $\bR\times\bR^N\setminus C$ s.t. $\cN=n$ on $O_n$. For each $j=1,\ldots,n$, there exists $y_j\in C^1(O_n,\bR^N)$ 
s.t.
$$
F_t^{-1}(\{x\})=\{y_1(t,x),\ldots,y_n(t,x)\}\quad\hbox{ for all }(t,x)\in O_n\,.
$$
d) There exists $a<0<b$ such that $C\cap((a,b)\times\bR^N)=\varnothing$ and $\cN=1$ on $(a,b)\times\bR^N$.

\noindent
e) For each $(t,x)\in\bR\times\bR^N\setminus C$, the positive integer $\cN(t,x)$ is odd.
\end{Thm}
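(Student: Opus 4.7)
\textbf{Plan of proof for Theorem \ref{T-NUC1}(e).} The idea is to use the Brouwer topological degree of the proper map $F_t$. Because $F_0=\Id_{\bR^N}$ and because $s\mapsto F_s$ is a continuous family of proper maps, one expects $\deg(F_t)=1$ for every $t\in\bR$. Combined with the fact that at a regular value the degree decomposes as a signed sum of $\pm 1$'s over the preimages, this forces $\cN(t,x)$ to be odd.

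First I would note that $F_t:\bR^N\to\bR^N$ is $C^1$ and proper for every $t\in\bR$ by Theorem \ref{T-NURough}(a). Thanks to the sublinear estimate $\sup_{|s|\le T}|F_s(y)-y|=o(|y|)$ as $|y|\to\infty$ from the same statement, $F_t$ extends continuously to a map $\overline{F}_t:S^N\to S^N$ (one-point compactification) by setting $\overline{F}_t(\infty)=\infty$, and one can then define $\deg(F_t):=\deg(\overline{F}_t)$ in the usual Brouwer sense; alternatively, one uses the definition of degree of a proper map through compactly supported cohomology. Both definitions give the classical formula at a regular value, and in particular $\deg(\Id_{\bR^N})=1$.

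Next I would prove $\deg(F_t)=1$ by a proper homotopy from $F_0$. Take $H:[0,t]\times\bR^N\to\bR^N$ defined by $H(s,y):=F_s(y)$; it is continuous since the Hamiltonian flow is continuous. The uniform estimate of Theorem \ref{T-NURough}(a) yields an $R_0>0$ such that $|F_s(y)|\ge\tfrac12|y|$ for all $|y|\ge R_0$ and all $s\in[0,t]$, so the $H$-preimage of any compact subset of $\bR^N$ is contained in $[0,t]\times\overline{B(0,R)}$ for some $R>0$, hence compact. Thus $H$ is a proper homotopy between $\Id$ and $F_t$, so $\deg(F_t)=\deg(\Id)=1$ by the homotopy invariance of the degree of proper maps.

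Finally, fix $(t,x)\in(\bR\times\bR^N)\setminus C$. In the $C^1$ setting the non-differentiability set $E$ defined by (\ref{DefE}) is empty, so by (\ref{DefCt}) the condition $x\notin C_t$ means that $x$ is a regular value of $F_t$. By Theorem \ref{T-NUC1}(b) the fibre $F_t^{-1}(\{x\})=\{y_1,\ldots,y_{\cN(t,x)}\}$ is finite and $\Det DF_t(y_j)\ne 0$ for each $j$. The standard degree formula at a regular value gives
\be
1=\deg(F_t)=\sum_{j=1}^{\cN(t,x)}\Sign\Det DF_t(y_j)\,.
\ee
Writing $n_\pm$ for the number of indices $j$ with $\Sign\Det DF_t(y_j)=\pm 1$, one has $n_+-n_-=1$ and $\cN(t,x)=n_++n_-=2n_-+1$, which is odd. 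The main (and only) technical point will be setting up the degree theory rigorously for the proper $C^1$ map $F_t$ and invoking homotopy invariance under the proper homotopy $H$; once the sublinearity of Theorem \ref{T-NURough}(a) is in hand, this is routine.
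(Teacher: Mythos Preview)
Your argument is correct and uses the same core idea as the paper: compute the Brouwer degree of $y\mapsto F_t(y)-x$, show it equals $1$, and read off the parity from the regular-value formula $\sum_j\Sign\Det DF_t(y_j)=1$. The only real difference is the homotopy you choose. The paper works on a large ball $B(0,R)$ and uses the \emph{linear} homotopy $G(s,y)=sy+(1-s)(F_t(y)-x)$, which never vanishes on $\d B(0,R)$ because of the estimate $(F_t(y)-x\mid y)=|y|^2+o(|y|^2)$ (this is (\ref{Deg<}), packaged as Lemma \ref{L-Top}). You instead use the flow homotopy $s\mapsto F_s$ and invoke degree theory for proper maps (or the one-point compactification). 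Both routes are valid; the paper's is slightly more elementary since it stays within the classical Brouwer degree on a bounded domain and avoids setting up the proper-map or $S^N$ formalism, while yours is arguably more conceptual in that it ties the degree directly to the dynamics.
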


\smallskip
The relation of the equation $F_t(y)=x$ for the unknown $y$ to $\Lambda_t$ is obvious: 
$$
(x,\xi)\in\Lambda_t\Leftrightarrow\hbox{ there exists }y\in\bR^N\hbox{ s.t. }(x,\xi)=\Phi_t(y,U^{in}(y))\,.
$$
Equivalently
\be\lb{Lt=}
\Lambda_t=\{(x,\xi)\in\bR^N\times\bR^N\hbox{ s.t. }\xi=\Xi_t(y,U^{in}(y))\hbox{ for some }y\in F_t^{-1}(\{x\})\}\,.
\ee

Under the assumptions in Theorem \ref{T-NUC1} (c), one has a fairly precise description of $\Lambda_t$ in terms of the counting function $\cN$ and of the
solutions $y_j\equiv y_j(t,x)$ of the equation $F_t(y)=x$. Indeed, defining
$$
\Lambda:=\{(t,x,\xi)\,|\,(x,\xi)\in \Lambda_t\,,\,\,t\in\bR\}\,,
$$
one has
$$
\Lambda_t\cap(O_n\times\bR^N)=\bigcup_{j=1}^n\{(x,\Xi_t(y_j(t,x),U^{in}(y_j(t,x)))\,|\,(t,x)\in O_n\}\,.
$$
In the more general case where $U^{in}$ satisfies only the weaker assumptions in Theorem \ref{T-NURough}, the equality (\ref{Lt=}) and statement (c) imply
that, for a.e. $x\in\bR^N$, 
$$
(\Lambda_t)_x:=\Lambda_t\cap(\{x\}\times\bR^N)\hbox{ is finite and }\#(\Lambda_t)_x\le\cN(t,x)\,.
$$
Therefore, in both cases, the counting function $\cN$ measures the number of folds in the set $\Lambda_t$. The counting function $\cN$ itself --- or more 
precisely the distribution of its values --- is estimated by statement (d) in Theorem \ref{T-NURough} and by statement (b) in Corollary \ref{C-NURough}.

A last remark is in order: by Theorem \ref{T-NUC1} b, the equation $F_t(y)=x$ has finitely many solutions if $x\notin C_t$. The converse is not true, in other
words, it may happen that the equation $F_t(y)=x$ has finitely many solutions for $x\in C_t$, as shown by the following elementary example.

\begin{Expl}
Set $N=1$ and $H(x,\xi)=\tfrac12\xi^2$ so that $F_t(y)=y+tU^{in}(y)$. Assume that $U^{in}$ satisfies (\ref{Sublin}) and is real analytic on $\bR$. Then, for each 
$t,x\in\bR$, the set $F_t^{-1}(\{x\})$ is finite.
\end{Expl}

Indeed, the map $F_t$ is proper because $U^{in}$ satisfies (\ref{Sublin}), so that the set $F_t^{-1}(\{x\})$ is compact for all $x\in\bR$. Since $y\mapsto F_t(y)-x$ 
is real analytic on $\bR$ and not identically $0$, the set $F_t^{-1}(\{x\})$ of its zeroes consists of isolated points. Then, the Bolzano-Weierstrass theorem implies 
that $F_t^{-1}(\{x\})$ is finite.

\subsection{Answering problems A-B: structure of $\mu(t)$ and $\rho(t)$}

Our main results on the structure of the probability measures $\mu(t)$ and $\rho(t)$ are summarized in the following theorem.
  
\begin{Thm}\lb{T-Mu(t)=}
Let $U^{in}$ be a continuous vector field on $\bR^N$ satisfying (\ref{Sublin}) and (\ref{CondLN1}). Let $t\in\bR$, and let $\rho^{in}$ be a probability distribution
on $\bR^N$. Let $\mu^{in}$ be the monokinetic measure (\ref{DefMuin}), let $\mu(t)$ and $\rho(t)$ be the Borel probability measures in (\ref{DefMut}).

\noindent
a) The following three properties are equivalent:
$$
\rho(t)(C_t)=0\Leftrightarrow\rho(t)(\bR^N\setminus C_t)=1\Leftrightarrow\rho^{in}=0\hbox{ a.e. on }Z_t\,.
$$
b) Under any one of the equivalent conditions in statement (a), one has $\rho(t)\ll\scrL^N$ with Radon-Nikodym derivative
$$
\rho(t,x):=\frac{d\rho(t)}{d\scrL^N}(x)=\sum_{y\in F_t^{-1}(\{x\})}\frac{\rho^{in}(y)}{J_t(y)}\quad\hbox{ for a.e. }x\in\bR^N\,.
$$
c) Under any one of the equivalent conditions in statement (a), the Borel probability measure $\mu(t)$ has a disintegration with respect to the Lebesgue
measure $\scrL^N$ on $\bR^N_x$ and the canonical projection $\Pi$ given by the formula
$$
\mu(t,x,\cdot)=\sum_{y\in F_t^{-1}(\{x\})}\frac{\rho^{in}(y)}{J_t(y)}\de_{\Xi_t(y,U^{in}(y))}\quad\hbox{ for a.e. }x\in\bR^N\,.
$$
\end{Thm}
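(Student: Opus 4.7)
\smallskip
\noindent\textbf{Proof plan.}
The starting point is the identity $\rho(t) = F_t \# \rho^{in}$, which comes directly from unwinding the definitions: for any Borel test function $\phi$,
$$
\int_{\bR^N}\phi(x)\,\rho(t)(dx)=\iint\phi(x)\,\mu(t)(dxd\xi)=\int_{\bR^N}\phi(F_t(y))\rho^{in}(y)\,dy,
$$
so that $\rho(t)(A)=\int_{F_t^{-1}(A)}\rho^{in}(y)\,dy$ for every Borel set $A$. The entire argument then rests on the area formula applied to $F_t$ on the set $P_t$ on which $J_t>0$. Since $U^{in}$ has derivatives in $L^{N,1}_{loc}$, by the results of Kauhanen--Koskela--Mal\'y the map $F_t$ is a.e. differentiable and possesses Lusin's property (N) (this is already the mechanism underlying statements (b), (c), (e) of Theorem \ref{T-NURough}), so that the area formula
$$
\int_{P_t}g(y)J_t(y)\,dy=\int_{\bR^N}\!\!\!\sum_{y\in F_t^{-1}(\{x\})\cap P_t}\!\!g(y)\,dx
$$
is at our disposal for every Borel $g\ge 0$.

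\smallskip
\noindent\textbf{Step (a).} Decompose $\bR^N=E\cup Z_t\cup P_t$ and write
$$
\rho(t)(C_t)=\int_{F_t^{-1}(C_t)\cap E}\rho^{in}+\int_{F_t^{-1}(C_t)\cap Z_t}\rho^{in}+\int_{F_t^{-1}(C_t)\cap P_t}\rho^{in}.
$$
The first integral vanishes since $\scrL^N(E)=0$ and $\rho^{in}\in L^1$. The definition of $C_t$ gives $Z_t\subset F_t^{-1}(C_t)$, so the second integral equals $\int_{Z_t}\rho^{in}$. For the third, apply the area formula with $g=\indc_{F_t^{-1}(C_t)}$: since $\scrL^N(C_t)=0$ by Theorem \ref{T-NURough}(e),
$$
\int_{P_t}\indc_{F_t^{-1}(C_t)}(y)J_t(y)\,dy=\int_{C_t}\#\bigl(F_t^{-1}(\{x\})\cap P_t\bigr)\,dx=0,
$$
and because $J_t>0$ on $P_t$ this forces $\scrL^N(P_t\cap F_t^{-1}(C_t))=0$, hence the third integral vanishes. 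Therefore $\rho(t)(C_t)=\int_{Z_t}\rho^{in}$, which immediately yields all three equivalences (the first being trivial from $\rho(t)$ being a probability measure).

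\smallskip
\noindent\textbf{Step (b).} Assume the equivalent conditions in (a). Then $\rho^{in}$ vanishes on $E\cup Z_t$ (up to a null set), so for every Borel $A\subset\bR^N$,
$$
\rho(t)(A)=\int_{P_t}\indc_A(F_t(y))\rho^{in}(y)\,dy=\int_{P_t}\indc_A(F_t(y))\,\frac{\rho^{in}(y)}{J_t(y)}\,J_t(y)\,dy.
$$
Applying the area formula with $g(y)=\indc_A(F_t(y))\rho^{in}(y)/J_t(y)$ and observing that, for $x\notin C_t$, $F_t^{-1}(\{x\})\cap P_t=F_t^{-1}(\{x\})$ by the very definition of $C_t$, we obtain
$$
\rho(t)(A)=\int_A\sum_{y\in F_t^{-1}(\{x\})}\frac{\rho^{in}(y)}{J_t(y)}\,dx,
$$
which identifies the Radon--Nikodym derivative claimed in (b).

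\smallskip
\noindent\textbf{Step (c).} For a test function $\phi\in C_c(\bR^N\times\bR^N)$,
$$
\iint\phi\,d\mu(t)=\int_{\bR^N}\phi\bigl(F_t(y),\Xi_t(y,U^{in}(y))\bigr)\rho^{in}(y)\,dy,
$$
and by the same restriction to $P_t$ followed by the area formula applied to $g(y)=\phi(F_t(y),\Xi_t(y,U^{in}(y)))\rho^{in}(y)/J_t(y)$, this equals
$$
\int_{\bR^N}\sum_{y\in F_t^{-1}(\{x\})}\phi\bigl(x,\Xi_t(y,U^{in}(y))\bigr)\,\frac{\rho^{in}(y)}{J_t(y)}\,dx.
$$
Reading off the inner sum as a measure in $\xi$ yields the disintegration stated in (c).

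\smallskip
\noindent\textbf{Main obstacle.} Everything hinges on having the area formula available for $F_t$, which is only a.e.\ differentiable and not classically $C^1$. The delicate ingredient is Lusin's property (N), which is precisely why the Lorentz regularity (\ref{CondLN1}) has been imposed; once (N) and a.e.\ differentiability are in hand, the above manipulations are almost mechanical. A second, minor, subtlety is the measurability of $x\mapsto\sum_{y\in F_t^{-1}(\{x\})}\rho^{in}(y)/J_t(y)$, which is handled along with the area formula via a standard truncation and monotone convergence argument.
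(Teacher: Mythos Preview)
Your proof is correct and follows essentially the same route as the paper's: both start from the identity $\rho(t)=F_t\#(\rho^{in}\scrL^N)$, both reduce parts (b) and (c) to the area formula for $F_t$ (invoked via the Kauhanen--Koskela--Mal\'y / Mal\'y results) applied to the weight $\rho^{in}/J_t$ on $P_t$, and both read off the disintegration of $\mu(t)$ by inserting the test function $\phi(F_t(y),\Xi_t(y,U^{in}(y)))$. Your treatment of part (a) is in fact slightly more explicit than the paper's: the paper writes directly $\rho^{in}\scrL^N(F_t^{-1}(C_t))=\int_{Z_t}\rho^{in}$, whereas you spell out why $\scrL^N(P_t\cap F_t^{-1}(C_t))=0$ via the area formula and $\scrL^N(C_t)=0$, which is exactly the missing justification.
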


\smallskip
Much less is known on the structure of the probability measures $\mu(t)$ and $\rho(t)$ when the equivalent conditions in statement (a) are not satisfied.
The available information is summarized in the following theorem.

\begin{Thm}\lb{T-DecompRhot}
Let $U^{in}$ be a continuous vector field on $\bR^N$ satisfying (\ref{Sublin}) and (\ref{CondLN1}). Let $t\in\bR$, and let $\rho^{in}$ be a probability density
on $\bR^N$. Let $\mu(t)$ and $\rho(t)$ be the Borel probability measures in (\ref{DefMut}), and let $\Lambda_t$ be the set defined in (\ref{DefLinLt}).

\smallskip
\noindent
a) For each $t\in\bR$, one has $\Supp(\mu(t))\subset\Lambda_t$.

\noindent
b) Let $\rho(t)=\rho_a(t)+\rho_s(t)$ be the Lebesgue decomposition of $\rho(t)$ with respect to the Lebesgue measure $\scrL^N$ on $\bR^N_x$, with
$$
\rho_a(t)\ll\scrL^N\quad\hbox{ and }\rho_s(t)\perp\scrL^N\,.
$$
Then $\rho_s(t)$ is carried by $C_t$ and 
$$
\rho_a(t)=F_t\#(\rho^{in}\indc_{P_t}\scrL^N)\,,\quad\hbox{ and }\rho_s(t)=F_t\#(\rho^{in}\indc_{Z_t}\scrL^N)\,.
$$
\end{Thm}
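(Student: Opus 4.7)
Part (a) is immediate: $\Phi_t$ is a homeomorphism of $\bR^N\times\bR^N$ (being the flow of the $C^1$ Hamiltonian vector field under the bounds (\ref{HypH})), so $\Supp(\mu(t))=\Phi_t(\Supp(\mu^{in}))$. Since $U^{in}$ is continuous, the graph $\Lambda^{in}$ is closed, and by Definition \ref{D-Monokin} one has $\Supp(\mu^{in})\subset\Lambda^{in}$. Applying $\Phi_t$ gives the desired inclusion $\Supp(\mu(t))\subset\Lambda_t$.

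For part (b), the plan proceeds in three steps. First, I identify $\rho(t)$ as a pushforward by $F_t$: inserting the definition of $\mu^{in}$ into $\rho(t)=\Pi\#\Phi_t\#\mu^{in}$ and integrating out $\xi$ yields, for every $\phi\in C_c(\bR^N)$,
\[
\int\phi\,d\rho(t)=\int\phi(X_t(y,U^{in}(y)))\rho^{in}(y)dy=\int\phi(F_t(y))\rho^{in}(y)dy,
\]
so $\rho(t)=F_t\#(\rho^{in}\scrL^N)$. Using $\scrL^N(E)=0$ and $\bR^N\setminus E=P_t\sqcup Z_t$, I then split
\[
\rho(t)=\nu_a+\nu_s,\quad\nu_a:=F_t\#(\rho^{in}\indc_{P_t}\scrL^N),\quad\nu_s:=F_t\#(\rho^{in}\indc_{Z_t}\scrL^N).
\]
Second, the area formula applied to $F_t$ restricted to $P_t$, with the weight $u(y):=\rho^{in}(y)/J_t(y)$ (well-defined on $P_t$) so that $\rho^{in}\indc_{P_t}=u\,J_t\indc_{P_t}$, gives $\nu_a=g\,\scrL^N$ with $g(x):=\sum_{y\in F_t^{-1}(\{x\})\cap P_t}\rho^{in}(y)/J_t(y)$; in particular $\nu_a\ll\scrL^N$. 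Third, the same area formula applied on $Z_t$, where $J_t\equiv 0$, yields $\int\#(Z_t\cap F_t^{-1}(\{x\}))dx=0$, so $Z_t\cap F_t^{-1}(\{x\})=\varnothing$ for a.e.\ $x$. Consequently $\nu_s$ is concentrated on $\{x\in\bR^N\,|\,F_t^{-1}(\{x\})\cap Z_t\ne\varnothing\}\subset C_t$, a set of vanishing Lebesgue measure by Theorem \ref{T-NURough}(e). Therefore $\nu_s\perp\scrL^N$ and is carried by $C_t$, and uniqueness of the Lebesgue decomposition identifies $\nu_a=\rho_a(t)$ and $\nu_s=\rho_s(t)$.

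The main obstacle is the justification of the area formula for $F_t$, which under the assumption (\ref{CondLN1}) is only a.e.\ differentiable rather than $C^1$. I expect this to be handled in the framework of Kauhanen--Koskela--Mal\'y already invoked to establish Theorem \ref{T-NURough}(b)--(e): the map $F_t$ inherits the requisite Sobolev-type regularity and a.e.\ differentiability from the composition of $y\mapsto(y,U^{in}(y))$ with the smooth Hamiltonian flow $\Phi_t$, and the crucial Luzin N-property on the critical set $Z_t$ follows precisely from the vanishing of $\int_{Z_t}J_t\,dy$ through the weighted area formula employed above.
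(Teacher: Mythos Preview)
Your proposal is correct and follows essentially the same route as the paper. The only cosmetic differences are: for (a) you invoke the homeomorphism property of $\Phi_t$ to transport supports, while the paper checks directly that $\la\mu(t),\psi\ra=0$ for test functions $\psi$ with $\Supp(\psi)\cap\Lambda_t=\varnothing$; and for (b) the paper appeals to Theorem \ref{T-Mu(t)=}(b) to obtain $\nu_a\ll\scrL^N$ and to the definition of $C_t$ to see that $\nu_s$ is carried by $C_t$ (noting $A\cap C_t=\varnothing\Rightarrow F_t^{-1}(A)\cap Z_t=\varnothing$), whereas you unfold the area-formula argument explicitly in both places---but the underlying computations are identical.
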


\smallskip
Applying statement (b) in Theorem \ref{T-Mu(t)=} shows that 
$$
\rho_a(t,x):=\frac{d\rho_a(t)}{d\scrL^N}(x)=\sum_{y\in F_t^{-1}(\{x\})}\frac{\rho^{in}(y)\indc_{P_t}(y)}{J_t(y)}\quad\hbox{ for a.e. }x\in\bR^N\,.
$$
Let $\mu^{in}_a$ be the monokinetic measure with momentum profile $U^{in}$ and density $\rho^{in}\indc_{P_t}$; by statement (c) of Theorem \ref{T-Mu(t)=},
$\mu_a(t):=\Phi_t\#\mu^{in}_a$ has a disintegration with respect to the Lebesgue measure $\scrL^N$ on $\bR^N_x$ and the canonical projection $\Pi$ given 
by the formula
$$
\mu_a(t,x,\cdot)=\sum_{y\in F_t^{-1}(\{x\})}\frac{\rho^{in}(y)\indc_{P_t}(y)}{J_t(y)}\de_{\Xi_t(y,U^{in}(y))}\quad\hbox{ for a.e. }x\in\bR^N\,.
$$

While the structure of $\mu_a(t)$ is well understood, the singular part $\rho_s(t)$ of the probability measure $\rho(t)$, and the phase space measure 
$\mu_s(t)=\mu(t)-\mu_a(t)$ are more difficult to characterize. The following theorem discusses the existence of atoms for the measure $\rho(t)$.

\begin{Thm}\lb{T-Atom}
Let $U^{in}$ be a continuous vector field on $\bR^N$ satisfying (\ref{Sublin}) and (\ref{CondLN1}). For each $t\in\bR$, consider the set
$$
A_t:=\{x\in\bR^N\hbox{ s.t. }\scrL^N(F_t^{-1}(\{x\})\cap Z_t)>0\}\,.
$$

\smallskip
\noindent
a) For each $t\in\bR$, one has $A_t\subset C_t$.

\noindent
b) Let $\rho^{in}$ be a probability density on $\bR^N$ such that $\rho^{in}>0$ a.e. on $Z_t$. Then
$$
\rho(t)(\{x\})>0\Leftrightarrow x\in A_t\,.
$$
c) The set $A_t$ is at most countable.
\end{Thm}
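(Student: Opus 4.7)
The statement consists of three parts, each of which should follow quickly from the structural results already established, especially Theorem \ref{T-DecompRhot}. I do not anticipate a major obstacle; the main task is bookkeeping with the decomposition $\rho(t)=\rho_a(t)+\rho_s(t)$ and a standard disjointness argument for (c).

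For part (a), my plan is to argue directly from the definitions. If $x\in A_t$, then $\scrL^N(F_t^{-1}(\{x\})\cap Z_t)>0$; in particular $F_t^{-1}(\{x\})\cap Z_t$ is non-empty, so a fortiori $F_t^{-1}(\{x\})\cap(Z_t\cup E)\ne\varnothing$, which is precisely the condition defining $C_t$ in (\ref{DefCt}). Hence $x\in C_t$.

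For part (b), I would invoke statement (b) of Theorem \ref{T-DecompRhot}, which gives the Lebesgue decomposition
$$
\rho(t)=\rho_a(t)+\rho_s(t),\qquad\rho_a(t)\ll\scrL^N,\quad\rho_s(t)=F_t\#(\rho^{in}\indc_{Z_t}\scrL^N).
$$
Since $\rho_a(t)\ll\scrL^N$, one has $\rho_a(t)(\{x\})=0$ for every $x$, so $\rho(t)(\{x\})=\rho_s(t)(\{x\})$. Unfolding the pushforward,
$$
\rho_s(t)(\{x\})=\int_{F_t^{-1}(\{x\})\cap Z_t}\rho^{in}(y)\,dy.
$$
Under the assumption that $\rho^{in}>0$ a.e. on $Z_t$, this integral is strictly positive if and only if $\scrL^N(F_t^{-1}(\{x\})\cap Z_t)>0$, i.e. if and only if $x\in A_t$. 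This gives the equivalence.

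For part (c), the strategy is a standard pigeonhole/disjointness argument. The key observation is that as $x$ ranges over $\bR^N$, the fibers $F_t^{-1}(\{x\})\cap Z_t$ are pairwise disjoint measurable subsets of $\bR^N$. To package this into a finite-measure estimate I would localize: for each $n\in\bN$, set
$$
A_t^{(n)}:=\{x\in\bR^N\,\hbox{ s.t. }\,\scrL^N(F_t^{-1}(\{x\})\cap Z_t\cap B(0,n))>0\},
$$
so that $A_t=\bigcup_{n\ge 1}A_t^{(n)}$. The sets $\{F_t^{-1}(\{x\})\cap Z_t\cap B(0,n)\}_{x\in A_t^{(n)}}$ are pairwise disjoint subsets of $B(0,n)$, hence for each $k\ge 1$ at most $k\cdot\scrL^N(B(0,n))$ of them can have measure $\ge 1/k$. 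Letting $k\to\infty$ shows $A_t^{(n)}$ is at most countable, and a countable union of countable sets yields the claim. The measurability of $F_t^{-1}(\{x\})\cap Z_t$ is immediate since $F_t$ is continuous and $Z_t$ is Borel (as the zero set of the measurable function $J_t$, intersected with $\bR^N\setminus E$).
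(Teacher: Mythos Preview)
Your arguments for (a) and (b) coincide with the paper's. For (c) the paper takes a slightly different route: it fixes any probability density $\rho^{in}$ with $\rho^{in}>0$ a.e.\ on $Z_t$, invokes part (b) to identify $A_t$ with the set of atoms of the probability measure $\rho(t)$, and then uses the standard fact that a probability measure has at most countably many atoms (via the Bienaym\'e--Chebyshev bound $\#\{x:\rho(t)(\{x\})\ge 1/n\}\le n$). Your direct disjointness argument on the fibers $F_t^{-1}(\{x\})\cap Z_t\cap B(0,n)$ is equally valid and is in fact the same pigeonhole idea applied in the domain rather than in the target; the paper's version is marginally slicker in that it recycles (b), while yours is self-contained and does not require choosing an auxiliary $\rho^{in}$.
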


\smallskip
The singular part $\rho_s(t)$ in the Lebesgue decomposition of the probability measure $\rho(t)$ with respect to the Lebesgue measure $\scrL^N$ may 
indeed have atoms: see example \ref{E-Atom} in the next section. It may also happen that $\rho_s(t)$ is a nonzero diffuse measure: see example 
\ref{E-Diffuse} in section \ref{S-Expl} below.  

\subsection{On the classical limit of Schr\"odinger's equation (Problem D)}

In this section, we return to the original motivation for the present work, i.e. the classical limit of the Schr\"odinger equation. Specifically, we seek information
on the solution $\psi_\eps$ of the Schr\"odinger equation (\ref{ClassSchrod}) with WKB initial data (\ref{ScalIn}), in cases where Theorem \ref{T-WKB} cannot
be applied for lack of regularity of the initial amplitude $a^{in}$ and phase function $S^{in}$. Although lowering the regularity requirements on the potential
$V$ is not our main purpose in this paper, it is also worth noticing that the regularity requirements on $V$ in the theorem below are much less restrictive than
in Theorem \ref{T-WKB}.

We assume in this section that $V\in C^2(\bR^N)$ satisfies
\be\lb{HypVWeak}
|V(x)|+|\grad V(x)|=o(|x|)\quad\hbox{ and }|\grad^2V(x)|=O(1)\quad\hbox{ as }|x|\to\infty
\ee
Under this assumption, the Hamiltonian $H(x,\xi):=\tfrac12|\xi|^2+V(x)$ satisfies (\ref{HypH}) and, as explained above, generates a global flow 
$\Phi_t(x,\xi)=(X_t(x,\xi),\Xi_t(x,\xi))$ for all $t\in\bR$ and $x,\xi\in\bR^N$.

Assume further that
\be\lb{SA1}
\sup_{x\in\bR^N}\int_{\bR^N}\Gamma_\eta(x-y)V^-(y)dy\to 0\hbox{ as }\eta\to 0\quad\hbox{ if }N\ge 2
\ee
with 
$$
\Gamma_\eta(z)=\left\{\ba{}&\indc_{[0,\eta]}(|z|)|z|^{2-N}&&\hbox{ if }N\ge 3\,,\\&\indc_{[0,\eta]}(|z|)\ln(1/|z|)&&\hbox{ if }N=2\,,\ea\right.
$$
while
\be\lb{SA2}
\sup_{x\in\bR^N}\int_{x-1}^{x+1}V^-(y)dy<\infty\quad\hbox{ if }N=1\,.
\ee

\begin{Thm}\lb{T-WWKB}
Let $\psi_\eps$ the solution of the Schr\"odinger equation \eqref{ClassSchrod} with initial data (\ref{ScalIn}). Assume that $a^{in}\in L^2(\bR^N)$ satisfies 
the normalization $\|a^{in}\|_{L^2(\bR^N)}=1$ and that $S^{in}\in C^1(\bR^N)$ is such that $U^{in}:=\nabla S^{in}$ satisfies (\ref{Sublin}) and the 
regularity condition (\ref{CondLN1}). Let $t\in\bR$.

\smallskip
\noindent 
a) For each $\chi\in C_b(\bR^N)$ s.t. $\chi(F_t(y))\rho^{in}(y)=0$ for a.e. $y\in Z_t$, one has
$$
\int_{\bR^N}\chi(x)|\psi_\eps(t,x)|^2dx\to\int_{\bR^N}\chi(x)\sum_{y\in F^{-1}_t(\{x\})}\frac{|a^{in}|^2\indc_{P_t}}{J_t}(y)dx
$$
as $\eps\to 0$.

\smallskip
\noindent
b) For each $\chi\in C_b(\bR^N)$ s.t. $\chi(\Xi_t(y,\nabla S^{in}(y))\rho^{in}(y)=0$ for a.e. $y\in Z_t$, one has
$$
\int_{\bR^N}\chi(\xi)|\cF_\eps\psi_\eps(t,\xi)|^2d\xi
\to
\int_{\bR^N}\sum_{y\in F^{-1}_t(\{x\})}\chi(\Xi_t(y,\nabla S^{in}(y)))\frac{|a^{in}|^2\indc_{P_t}}{J_t}(y)dx
$$
as $\eps\to 0$, where the $\eps$-Fourier transform $\cF_\eps$ is defined as follows:
$$
\cF_\eps\Psi(\xi):=\frac1{(2\pi\eps)^{N/2}}\int_{L^2(\bR^N)}\Psi(x)e^{i\frac{x\xi}{\eps}}dx\,.
$$

\end{Thm}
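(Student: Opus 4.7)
The plan is to invoke Theorem \ref{T-Wigner} to identify the Wigner measure of $\psi_\eps(t,\cdot)$ in the classical limit, combine it with the structural decomposition of $\mu(t)$ from Theorems \ref{T-Mu(t)=} and \ref{T-DecompRhot}, and then upgrade the $\cS'$-convergence to narrow convergence of the position and momentum marginals. The hypotheses of Theorem \ref{T-Wigner} are verified under (\ref{HypVWeak}), which entails $V\in C^{1,1}(\bR^N)$ with $V(x)\ge-C(1+|x|^2)$, together with (\ref{SA1})--(\ref{SA2}), which guarantee self-adjointness of $-\tfrac12\eps^2\Dlt+V$ on $L^2(\bR^N)$. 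Consequently $W_\eps[\psi_\eps(t,\cdot)]\to\mu(t)$ in $\cS'(\bR^N\times\bR^N)$ uniformly in $t$, with $\mu(t)=\Phi_t\#\mu^{in}$ and $\mu^{in}(x,\cdot)=|a^{in}(x)|^2\de_{\grad S^{in}(x)}$; thus $\rho^{in}=|a^{in}|^2$ and $U^{in}=\grad S^{in}$. By Theorem \ref{T-DecompRhot} the position marginal has Lebesgue decomposition $\rho(t)=\rho_a(t)+\rho_s(t)$ with $\rho_s(t)=F_t\#(|a^{in}|^2\indc_{Z_t}\scrL^N)$ carried by $C_t$, and Theorem \ref{T-Mu(t)=}(b) gives $\rho_a(t,x)=\sum_{y\in F_t^{-1}(\{x\})}|a^{in}(y)|^2\indc_{P_t}(y)/J_t(y)$; the momentum marginal $\eta(t):=\Pi_\xi\#\mu(t)$ equals the pushforward of $|a^{in}|^2\scrL^N$ by the map $y\mapsto\Xi_t(y,\grad S^{in}(y))$.

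The central technical step is to upgrade the $\cS'$-convergence of Theorem \ref{T-Wigner} to narrow convergence of marginals, namely $|\psi_\eps(t,x)|^2 dx\to\rho(t)$ and $|\cF_\eps\psi_\eps(t,\xi)|^2 d\xi\to\eta(t)$ tested against $\chi\in C_b(\bR^N)$. This rests on three ingredients: (i) the exact marginal identities $\int W_\eps[\psi_\eps(t)](x,\xi)d\xi=|\psi_\eps(t,x)|^2$ and $\int W_\eps[\psi_\eps(t)](x,\xi)dx=|\cF_\eps\psi_\eps(t,\xi)|^2$; (ii) mass conservation $\|\psi_\eps(t,\cdot)\|_{L^2}^2=1$ for the Schr\"odinger flow; (iii) the fact that $\mu(t)$ is itself a probability measure on $\bR^N\times\bR^N$, so no mass can be lost at infinity in the limit. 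Concretely, one approximates the pure position test function $\chi(x)$ by $\chi(x)\phi(\xi/R)\in\cS(\bR^{2N})$ with $\phi\in\cS(\bR^N)$, $\phi(0)=1$, passes to $\eps\to 0$ via Theorem \ref{T-Wigner}, and then controls the tail $\iint\chi(x)(1-\phi(\xi/R))W_\eps[\psi_\eps(t)]dxd\xi$ uniformly in $\eps$ by substituting the nonnegative Husimi transform (which differs from $W_\eps$ by an $\cS'$-negligible term) and exploiting the $\xi$-tightness of its $\xi$-marginal. The momentum density is treated symmetrically.

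With the marginals identified, the two statements follow at once. For (a), the hypothesis $\chi(F_t(y))|a^{in}(y)|^2=0$ a.e.\ on $Z_t$ forces $\int\chi\,d\rho_s(t)=\int_{Z_t}\chi(F_t(y))|a^{in}(y)|^2 dy=0$, so $\int\chi\,d\rho(t)=\int\chi\,d\rho_a(t)$, which is exactly the stated right-hand side. For (b), the analogous hypothesis kills the $Z_t$-portion of $\int\chi\,d\eta(t)=\int\chi(\Xi_t(y,\grad S^{in}(y)))|a^{in}(y)|^2 dy$; the surviving $P_t$-portion is recast via the area formula (valid on $P_t$ because $J_t>0$) as $\int\sum_{y\in F_t^{-1}(\{x\})\cap P_t}\chi(\Xi_t(y,\grad S^{in}(y)))|a^{in}(y)|^2/J_t(y)\,dx$, matching the stated limit. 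The principal obstacle is the narrow-convergence upgrade: securing $\xi$-tightness (and $x$-tightness for the symmetric argument) of the Husimi marginals uniformly in $\eps$ is classical but is where the sublinearity of $V$ and $\grad S^{in}$ together with the self-adjointness hypotheses (\ref{SA1})--(\ref{SA2}) genuinely contribute.
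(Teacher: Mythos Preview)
Your proposal is correct and follows the same overall strategy as the paper: invoke the Lions--Paul theorem for the Wigner limit, use Husimi nonnegativity together with conservation of total mass to upgrade to narrow convergence, and then apply Theorems \ref{T-Mu(t)=} and \ref{T-DecompRhot} to identify the right-hand sides once the $Z_t$-contribution is killed by the hypothesis on $\chi$.

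The only noteworthy difference is in the execution of the narrow-convergence step. The paper works with the Husimi transform $\widetilde W_\eps=W_\eps\star G_\eps$ from the outset: since $\widetilde W_\eps\ge 0$ with total mass $1$ and $\mu(t)$ is a probability measure, the $C_c$-convergence of $\widetilde W_\eps$ to $\mu(t)$ (from \cite{LionsPaul}) immediately upgrades to $C_b(\bR^{2N})$-convergence by the standard ``no mass loss'' argument; the marginals of $\widetilde W_\eps$ are $g_\eps\star|\psi_\eps|^2$ and $g_\eps\star|\cF_\eps\psi_\eps|^2$, and the Gaussian smoothing is removed by a one-line $C^1_b$ estimate. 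Your route---cutting off in $\xi$ by $\phi(\xi/R)$ and controlling the remainder via Husimi---reaches the same conclusion but is slightly more roundabout; note also that for general $\chi\in C_b$ the product $\chi(x)\phi(\xi/R)$ is not in $\cS(\bR^{2N})$, so you would first need to reduce to $\chi\in C^\infty_c$ (or $\cS$) before invoking Theorem \ref{T-Wigner}, and only afterwards pass to $C_b$ via the equal-masses argument. Finally, your closing remark that the sublinearity and self-adjointness hypotheses ``genuinely contribute'' to tightness overstates their role here: in the paper's argument tightness is a pure consequence of mass conservation on both sides, and those hypotheses enter only to ensure that Theorem \ref{T-Wigner} applies.
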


\smallskip
In other words, even though the regularity assumptions on $a^{in}$, $S^{in}$ and $V$ do not allow us to apply Theorem \ref{T-WKB}, statements (a)-(b) in 
Theorem \ref{T-WWKB} provide information on $|\psi_\eps|^2$ and on $|\cF_\eps\psi_\eps|^2$ which is consistent with the approximation (\ref{WKBLimV}) 
by the WKB asymptotic solution (\ref{WKBSol})  in view of (\ref{SumWigner}). 

Theorem \ref{T-WWKB} is a consequence of the Lions-Paul Theorem \ref{T-Wigner}, and of our Theorem \ref{T-Mu(t)=} on the structure of the propagated 
Wigner measure. Whether more information on $\psi_\eps$ itself (instead of $|\psi_\eps|^2$ and $|\cF_\eps\psi_\eps|^2$) can be extracted from the approach
of the classical limit of quantum mechanics based on the Wigner transform seems to be an interesting open question.

\section{Main results II: Examples and counterexamples}\lb{S-Expl}

This section gathers various examples showing that our results in the previous section are sharp. All these examples are in space dimension $1$, so that the 
initial momentum profile $U^{in}$ is the gradient of the phase function
\be\lb{InPhase}
S^{in}(x)=\int_0^xU^{in}(z)dz\,.
\ee

\subsection{On the regularity condition (\ref{CondLN1}) on the momentum profile $U^{in}$}

The regularity condition (\ref{CondLN1}) on the initial momentum profile $U^{in}$ was chosen in order to apply the area formula of geometric measure theory. 
This formula is the key argument in the proof of statements (c)-(d)-(e)-(f) in Theorem \ref{T-NURough}, of statement (b) in Corollary \ref{C-NURough}) and of 
statements (b)-(c) in Theorem \ref{T-Mu(t)=}. 

If the Hamiltonian function $H$ satisfies the assumptions (\ref{HypH}), it generates a global, $C^1$ Hamiltonian flow $\Phi_t$ as explained in section
\ref{SS-DynInit}.  Let $U^{in}$ be a continuous vector field on $\bR^N$ and $\rho^{in}$ be a probability density on $\bR^N$, and let $\mu^{in}$ be the 
monokinetic measure with momentum profile $U^{in}$ and density $\rho^{in}$ (see Definition \ref{D-Monokin}). Then $\mu(t):=\Phi_t\#\mu^{in}$ is a well 
defined element of $C_b(\bR;w-\cP(\bR^N_x\times\bR^N_\xi))$, which is a solution of the Liouville equation of classical mechanics:
$$
\d_t\mu+\Div_x(\mu\grad_\xi H(x,\xi))-\Div_\xi(\mu\grad_xH(x,\xi))=0\,.
$$

This suggests the following questions: can one extend the validity of our Theorem \ref{T-Mu(t)=} on the structure of $\mu(t)$ to cases where $U^{in}$ does not
satisfy the regularity assumption (\ref{CondLN1})? In other words, is is still true that the disintegration of $\mu(t)$ is given by an a.e. finite sum of monokinetic
measures in the complement of the Lebesgue-negligible caustic fiber $C_t$?

The example below answers these questions in the negative. In this example, the Hamiltonian flow $\Phi_t$ is the free flow, the space dimension is $1$,
the initial momentum profile is continuous with compact support, and even of bounded variation, but not absolutely continuous. We prove that the caustic
fiber $C_t$ is not Lebesgue-negligible for some instant of time $t>0$.

\begin{Expl}\lb{E-CantorFunc}
Let $N=1$ and $H(x,\xi)=\tfrac12\xi^2$, so that $\Phi_t(x,\xi)=(x+t\xi,\xi)$. Let $K\subset[0,1]$ be the ternary Cantor set. We recall that $K$ satisfies
$\scrH^s(K)=1$ with $s=\ln2/\ln3$. Set
$$
U^{in}(z):=\left\{\ba{}&0&&\quad\hbox{ if }z\notin[0,1]\,,\\&\scrH^s([0,z]\cap K)-z\,,&&\quad\hbox{ if }z\in[0,1]\,.\ea\right.
$$
a) Then $U^{in}\in C_c(\bR)\cap BV(\bR)$, but the signed measure $(U^{in})'$ is not an element of $L^{1,1}(\bR)=L^1(\bR)$, so that (\ref{CondLN1}) is not 
satisfied.

\noindent
b) At $t=1$, the map $F_1$ is the Cantor function, given by the formula
$$
F_1(y):=\left\{\ba{}&y&&\quad\hbox{ if }y\notin[0,1]\,,\\&\scrH^s([0,y]\cap K)\,,&&\quad\hbox{ if }y\in[0,1]\,.\ea\right.
$$
In particular, $F_1\in C(\bR)$ and is increasing (and therefore $F_1\in BV_{loc}(\bR)$).

\noindent
c) The map $F_1$ is differentiable on $\bR\setminus K$, with $F_1'(y)=0$ for all $y\in[0,1]\setminus K$; besides, $F_1$ is not differentiable on $K$.

\noindent
d) The caustic fiber at time $t=1$ is $C_1=[0,1]$.
\end{Expl}

\smallskip
In other words, any initial wave function (\ref{ScalIn}) for the free Schr\"odinger equation with initial phase (\ref{InPhase}) leads to a caustic fiber of positive measure at time $t=1$.

\smallskip
Notice that, in this example, the set $Z_1=(F'_1)^{-1}(\{0\})=[0,1]\setminus K$ is an open set of $(0,1)$. As such, $Z_1$ is a countable union of open intervals
on which $F_1$ is a constant, and thus $F_1(Z_1)$ is at most countable and therefore Lebesgue negligible. In other words, the fact that $C_1$ is of
positive Lebesgue measure strongly depends upon the inclusion of the nondifferentiability set $E$ of $F_1$ (here $E=K$) in the definition (\ref{DefC}) of the
caustic fiber. If one seeks to extend our analysis of the propagation of monokinetic measures to the case of momentum profiles $U^{in}$ less regular than
those considered in the present paper, the choice of the Lebesgue negligible set used in the place of $E$ in the definition of the caustic fiber becomes crucial
in order to avoid caustic fibers of positive Lebesgue measure. This choice however does not have any effect on the measures $\mu(t)$ and $\rho(t)$, as 
explained above. These issues will be addressed elsewhere.

\subsection{On the structure of the singular measure $\rho_s(t)$}

As explained above, the singular part $\rho_s(t)$ in the Lebesgue decomposition of the measure $\rho(t)$ with respect to the Lebesgue measure $\scrL^N$ 
may have an atomic part, which can be constructed easily following statement (b) in Theorem \ref{T-Atom}. Here is an example.

\begin{Expl}\lb{E-Atom}
Let $N=1$ and $H(x,\xi)=\tfrac12\xi^2$, so that $\Phi_t(x,\xi)=(x+t\xi,\xi)$. Let $U^{in}\in C^\infty(\bR)$ be defined by the formula
$$
U^{in}(z):=-\int_0^zv(z)dz
$$
where $v\in C^\infty_c(\bR)$ is a bump function chosen so that
$$
\Supp(v)\subset(-1,1)\,,\quad v(z)=1\hbox{ for }|z|\le\tfrac12\,,\quad 0<v(z)<1\hbox{ for }|z|\in(\tfrac12,1)\,.
$$

For each $t\in\bR$, the map $F_t:\,\bR\to\bR$ is given by the formula $F_t(y)=y+tU^{in}(y)$. For $t=1$, one has $Z_1=[-\tfrac12,\tfrac12]$ and $F_1(Z_1)=\{0\}$.
Therefore $A_1=C_1=\{0\}$. Thus, if $\rho^{in}\in C^\infty(\bR)$ is a probability density with $\Supp(\rho^{in})\subset(-\tfrac12,\tfrac12)$, one has
$$
\rho(1)=F_1\#(\rho^{in}\scrL^1)=\de_0\,.
$$
\end{Expl}

\smallskip
Notice that the initial momentum profile $U^{in}$ and the initial density $\rho^{in}$ can be chosen as $C^\infty$ functions. In other words, the assumptions of 
Theorem \ref{T-WKB} are satisfied in this example (with potential $V=0$). Not much information can be gained from applying Theorem \ref{T-WKB} in this 
case, since the asymptotic WKB wave function $\Psi_\eps$ in this case would satisfy $\Psi_\eps(1,\cdot)=0$ on $\bR\setminus C_1$. Yet, the solution of the 
free Schr\"odinger equation for an initial wave function (\ref{ScalIn}) with initial phase function (\ref{InPhase}) and $\rho^{in}=(a^{in})^2$ where $a^{in}$ is 
a $C^\infty$ function with support in $(-\tfrac12,\tfrac12)$ is a semiclassical Lagrangian distribution, and Example \ref{E-Atom} can be formulated within the
formalism of Lagrangian distributions. (See for instance \S 2.2 and 4.2 in \cite{PaulUribe} for an account of this theory.)

\smallskip
Showing that the singular part $\rho_s(t)$ in the Lebesgue decomposition of the measure $\rho(t)$ with respect to the Lebesgue measure $\scrL^N$ may be
a nontrivial diffuse measure is less obvious. 

\begin{Expl}\lb{E-Diffuse}
Let $N=1$ and $H(x,\xi)=\tfrac12\xi^2$, so that $\Phi_t(x,\xi)=(x+t\xi,\xi)$. Pick $K$, a compact subset of $(0,1)\setminus\bQ$ such that $\scrL^1(K)\in(\tfrac12,1]$.
For each $k\ge 1$, there exists $U^{in}\in C^k_b(\bR)$ such that the map $F_t:\,\bR\to\bR$, defined by the formula $F_t(y)=y+tU^{in}(y)$ satisfies the following
conditions:

\smallskip
\noindent
a) for $t=1$, the map $F_1$ is increasing on $\bR$ and onto;

\noindent
b) one has $F_1'(y)>0$ for all $y<0$, all $y>1$ and all $y\in(0,1)\setminus K$, while $F_1'(y)=0$ for all $y\in K\cup\{0,1\}$, so that the caustic fiber at time $1$ is 
$C_1=F_1(K\cup\{0,1\})$.

\smallskip
Then, for any probability density $\rho^{in}$ on $\bR$ such that $\rho^{in}=0$ a.e. on $\bR\setminus K$ --- for instance, $\rho^{in}=\frac1{\scrL^1(K)}\indc_K$
--- the measure $\rho(1)=F_1\#(\rho^{in}\scrL^1)$ satisfies the following properties
$$
\rho(1)(\bR)=1\,,\quad\rho(1)\perp\scrL^1=0\,,\quad\hbox{ and }\rho(1)(\{x\})=0\hbox{ for all }x\in\bR\,.
$$
\end{Expl}

\smallskip
Notice that the initial profile $U^{in}$ can be chosen as a $C^k$ function with $k$ arbitrarily large. However the initial density $\rho^{in}$ is not smooth in this 
example, so that the regularity assumptions of Theorem \ref{T-WKB} are not satisfied. In the language of wave functions, this example corresponds to the
propagation (under the dynamics of the free Schr\"odinger equation) of a WKB ansatz (\ref{ScalIn}) with initial phase function (\ref{InPhase}) of class $C^k$ 
with $k$ arbitrarily large. Yet the usual tools of semiclassical analysis (such as the formalism of Lagrangian distributions) do not apply to this case, as the 
initial amplitude $a^{in}=\sqrt{\rho^{in}}$ cannot be smooth, or even continuous, in this example.

\subsection{On the Hausdorff dimensions of the caustic fiber and of the support of $\rho_s(t)$}

The singular part $\rho_s(t)$ in the Lebesgue decomposition of the probability measure $\rho(t)$ with respect to $\scrL^N$ is carried by the caustic fiber $C_t$ 
which is a Lebesgue-negligible set. The next example shows that $\Supp(\rho(t))$ can be of arbitrary Hausdorff dimension.

We briefly recall the following construction (see \cite{Falco} on p. 15) which generalizes the construction of the ternary Cantor set.

Let $\th\in(0,\tfrac12)$. Set $E_0:=[0,1]$, let $E_1:=[0,\th]\cap[1-\th,1]$, and obtain $E_{n+1}$ from $E_n$ by removing the open segment of proportion $1-2\th$ 
from the center of each connected component in $E_n$. Therefore $E_n$ is the union of $2^n$ closed segments of length $\th^n$. In other words,
$$
[0,1]\setminus E_n=\bigcup_{1\le k\le 2^{m-1}\atop 1\le m\le n}I_{m,k}
$$
where
$$
I_{m,k}:=(a_{m,k}-r_m,a_{m,k}+r_m)\quad\hbox{ with }r_m:=\tfrac12(1-2\th)\th^{m-1}\,.
$$
Then 
$$
K(\th)=\bigcap_{n\ge 0}E_n
$$
is a compact subset of $[0,1]$ such that $\scrH^{s}(K(\th))=1$ with $s=\ln 2/\ln(1/\th)$.

\begin{Expl}\lb{E-HausDim}
Let $N=1$ and $H(x,\xi)=\tfrac12\xi^2$, so that $\Phi_t(x,\xi)=(x+t\xi,\xi)$. Let $s\in(0,1)$, and let $\th=2^{-1/s}$. Set
$$
\Om(\th):=\bigcup_{1\le k\le 2^{m-1}\atop m\ge 1}J_{m,k}
$$
with
$$
J_{m,k}:=(a_{m,k}-\th r_m,a_{m,k}+\th r_m)\,,
$$
and let $\tilde K(\th)=[0,1]\setminus\Om(\th)$. Define the initial momentum profile $U^{in}$ by the formula
$$
U^{in}(y):=\left\{\ba{}&\frac1\th\scrL^1(\Om(\th)\cap(0,y))-y&&\quad\hbox{ if }y\in[0,1]\,,\\ &0&&\quad\hbox{ if }y\notin[0,1]\,,\ea\right.
$$
so that $U^{in}\in\Lip(\bR)$ and $\Supp(U^{in})\subset[0,1]$. Let $F_t(y)=y+tU^{in}(y)$ for all $t,y\in\bR$. Then

\smallskip
\noindent
a) for $t=1$, the function $F_1$ is increasing on $(-\infty,0)\cup\Om(\th)\cup(1,+\infty)$ and nondecreasing on $\bR$;

\noindent
b) for $t=1$, one has $F_1((-\infty,0)\cup\Om(\th)\cup(1,+\infty))=\bR\setminus K(\th)$ and $F_1(\tilde K(\th))=K(\th)$;

\noindent
c) for $\rho^{in}=\frac1{1-\th}\indc_{\tilde K(\th)}$, let $\mu^{in}$ be the monokinetic measure with momentum profile $U^{in}$ and density $\rho^{in}$,
and let $\mu(t)=\Phi_t\#\mu^{in}$. Then
$$
\mu(1)=\frac1{1-\th}\sum_{m\ge 1}\sum_{k=1}^{2^{m-1}}(\de_{a_{m,k}-r_m}\otimes\indc_{(-(1-\th)r_m,0)}+\de_{a_{m,k}+r_m}\otimes\indc_{(0,(1-\th)r_m)})\,;
$$
c) under the same assumptions as in statement (c),
$$
\rho(1)=F_1\#\rho^{in}=\Pi\#\mu(1)=\tfrac12(1-2\th)\sum_{m\ge 1}\th^{m-1}\sum_{k=1}^{2^{m-1}}(\de_{a_{m,k}-r_m}+\de_{a_{m,k}+r_m})\,,
$$
so that
$$
\rho(1)(\bR)=1\,,\quad\Supp(\rho(1))=K(\th)\,,\quad\hbox{ and }\rho(1)\perp\scrL^1\,;
$$
e) for $t=1$, one has $C_1\cap(0,1)=K(\th)\cap(0,1)$, so that $\scrH^s(C_1)=\scrH^s(K(\th))=1$ with $s=\ln2/\ln(1/\th)$.
\end{Expl}

This example corresponds to the propagation (under the dynamics of the free Schr\"odinger equation) of a WKB type initial wave function (\ref{ScalIn}), with
initial phase (\ref{InPhase}), leading to a Wigner measure at time $t=1$ which is carried by the set
\be\lb{ConcentrSet}
\bigcup_{m\ge 1}\!\left(\bigcup_{k=1}^{2^m-1}\{a_{m,k}-r_m\}\!\right)\times(-(1-\th)r_m,0)
\cup\bigcup_{m\ge 1}\!\left(\bigcup_{k=1}^{2^m-1}\{a_{m,k}+r_m\}\!\right)\times(0,(1-\th)r_m).
\ee
Since the solution of the Cauchy problem for the free Schr\"odinger equation is given by an explicit formula, this construction provides an example of a wave function 
whose Wigner is concentrated precisely on the set (\ref{ConcentrSet}) above.  

Notice that $\rho_s(1)$ is carried by the countable set 
$$
\{a_{m,k}\pm r_m\,|\,1\le k\le 2^{m-1}\,,\,\,m\ge 1\}
$$
even though $\Supp(\rho_s(1))=K(\th)$ is of Hausdorff dimension $s=\ln2/\ln(1/\th)$.

Example \ref{E-HausDim} is vaguely reminiscent of Example \ref{E-Atom}: it corresponds to an infinite accumulation of wave functions as in Example \ref{E-Atom}, 
except that the initial amplitude cannot be chosen smooth, or even continuous. The concentration of the Wigner measure on the set (\ref{ConcentrSet}) at time 
$t=1$ is produced by the combination of oscillations in the WKB initial data (\ref{ScalIn})-(\ref{InPhase}) at the same scale as the characteristic scale $\eps$ of the 
Wigner transform with the dynamics of the free Schr\"odinger equation. The importance of fast oscillations in this concentration phenomenon can be seen in the 
fact that the support of the Wigner measure at time $t=1$ is not included in the null section $\bR^N_x\times\{0\}$ of the phase space $\bR^N_x\times\bR^N_\xi$. 

At this point, we return to the remarks on our definition of caustic fiber following (\ref{DefCt}) and Example \ref{E-CantorFunc}. Examples \ref{E-CantorFunc} and 
(\ref{E-HausDim}) show that the regularity in assumption (\ref{CondLN1}) corresponds to a threshold in the size of the caustic fiber defined in (\ref{DefCt}), at 
least in the case of space dimension $1$. Indeed, if $U^{in}$ satisfies (\ref{CondLN1}), the caustic fiber is Lebesgue negligible, but can be of arbitrary Hausdorff 
dimension in $(0,1)$, while if $U^{in}$ is of bounded variation but does not satisfy (\ref{CondLN1}), the caustic fiber can be of positive Lebesgue measure.
Thus the definition (\ref{DefCt}) of the caustic fiber is consistent with the regularity condition (\ref{CondLN1}). Of course, as explained above, our choice in
the definition of the caustic fiber does not have any effect the propagation of the monokinetic measure.

\section{ The Hamiltonian flow $\Phi_t$}\lb{S-HAMFLOW}

In this section, we have collected the properties of the Hamiltonian flow $\Phi_t$ defined in (\ref{DefPhi}) which are used throughout the paper. 

\begin{Lem}\lb{L-HamFlow}
The map $(t,x,\xi)\mapsto\Phi_t(x,\xi)$ is of class $C^1$ on $\bR\times\bR^N\times\bR^N$. For each $\eta>0$, there exists $C_\eta>0$ such that
$$
\sup_{|t|\le T}|X_t(x,\xi)-x|\le C_\eta(1+|\xi|)+\eta|x|
$$
for each $x,\xi\in\bR^N$. Moreover
$$
|D\Phi_t(x,\xi)-\Id_{\bR^N\times\bR^N}|\le e^{\ka|t|}-1
$$
for all $t\in\bR$ and each $x,\xi\in\bR^N$.
\end{Lem}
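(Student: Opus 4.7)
The plan is to treat the three assertions separately, taking the regularity first and then the two quantitative estimates.

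For the $C^1$ regularity in the first assertion, the Hamiltonian vector field $F:=(\grad_\xi H,-\grad_x H)$ is of class $C^1$ with globally bounded derivative by the third bound in (\ref{HypH}). Combined with the global existence of $\Phi_t$ already noted after (\ref{HypH}), the classical theorem on $C^1$ dependence of ODE solutions on initial data yields that $(t,x,\xi)\mapsto\Phi_t(x,\xi)$ is of class $C^1$ on $\bR\times\bR^N\times\bR^N$.

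For the displacement estimate, I would set $P(t):=|X_t(x,\xi)-x|$ and $Q(t):=|\Xi_t(x,\xi)-\xi|$. Integrating Hamilton's equations and using (\ref{HypH}) gives the integral inequalities
$$
P(t)\le\ka|t|(1+|\xi|)+\ka\int_0^{|t|}Q(s)\,ds,\qquad Q(t)\le\int_0^{|t|}h(|x|+P(s))\,ds+\ka|t||\xi|+\ka\int_0^{|t|}Q(s)\,ds.
$$
The key device is the sublinearity of $h$ at infinity: for each $\delta>0$ there is $M_\delta>0$ with $h(r)\le\delta r+M_\delta$ for all $r\ge 0$. Substituting this into the bound on $Q$ and adding, the quantity $R:=P+Q$ satisfies
$$
R(t)\le|t|(\ka+M_\delta+2\ka|\xi|+\delta|x|)+(2\ka+\delta)\int_0^{|t|}R(s)\,ds,
$$
and Gronwall's inequality then gives, for $|t|\le T$,
$$
R(t)\le\frac{e^{(2\ka+\delta)T}-1}{2\ka+\delta}\bigl(\ka+M_\delta+2\ka|\xi|+\delta|x|\bigr).
$$
The coefficient of $|x|$ is $\delta(e^{(2\ka+\delta)T}-1)/(2\ka+\delta)$, which tends to $0$ as $\delta\to 0^+$ with $T$ fixed. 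Given $\eta>0$, one therefore chooses $\delta$ small enough that this coefficient is below $\eta$, and absorbs the remaining terms into $C_\eta(1+|\xi|)$. The conclusion follows from $|X_t-x|=P(t)\le R(t)$.

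For the derivative estimate, I would use the variational equation. The Jacobian $A(t):=D\Phi_t(x,\xi)$ satisfies $A'(t)=DF(\Phi_t(x,\xi))A(t)$, $A(0)=\Id$, where $|DF|\le\ka$ by (\ref{HypH}) (since $DF$ is the Hessian of $H$ pre-multiplied by the canonical symplectic matrix, which is orthogonal). Taking norms in $A(t)-\Id=\int_0^t DF(\Phi_s)A(s)\,ds$ yields $|A(t)-\Id|\le\ka\int_0^{|t|}(1+|A(s)-\Id|)\,ds$, and comparison with the solution $y(t)=e^{\ka t}-1$ of $y'=\ka(1+y)$, $y(0)=0$, produces the stated bound $|A(t)-\Id|\le e^{\ka|t|}-1$.

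The only nontrivial point is the sublinear-in-$|x|$ bound in the second assertion: a direct Gronwall applied to $|X_t|+|\Xi_t|$ would produce a coefficient $\ge 1$ in front of $|x|$, which is useless here. The sublinearity of $h$ is what permits trading the $|x|$-growth for a multiplicative constant $\delta$ that can be made arbitrarily small, and the order of quantifiers (fix $T$ first, choose $\delta$ afterwards) is essential. The other two assertions are routine consequences of the standard ODE theory.
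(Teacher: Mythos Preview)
Your proof is correct and follows essentially the same strategy as the paper's: $C^1$ regularity from standard ODE theory, the displacement bound via the sublinearity of $h$ (splitting $h(r)\le\delta r+M_\delta$) combined with Gronwall, and the derivative bound from the variational equation plus Gronwall. The only tactical difference is that the paper applies Gronwall twice in succession (first to $|\Xi_t|$, then to $|X_t-x|$) while you apply it once to the sum $P+Q$; both routes rely on exactly the same key device and yield the same conclusion.
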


\begin{proof}
The existence a,d regularity of the flow on its domain of definition follows from the classical Cauchy-Lipschitz theory. 

By (\ref{HypH}), one also has the following a priori estimates, with the notation (\ref{DefPhi}). 

First
$$
|X_t(x,\xi)-x|\le\int_0^t|\grad_\xi H(\Phi_s(x,\xi))|ds\le\ka t+\ka\int_0^t|\Xi_s(x,\xi)|ds
$$
and
$$
\ba
|\Xi_t(x,\xi)|&\le|\xi|+\int_0^t|\grad_xH(\Phi_s(x,\xi))|ds
\\
&\le|\xi|+\ka\int_0^t|\Xi_s(x,\xi)|ds+\int_0^th(X_s(x,\xi))ds\,.
\ea
$$
By Gronwall's inequality, for all $0\le s\le t$
\be\lb{Xi<}
|\Xi_s(x,\xi)|\le\left(|\xi|+\int_0^th(X_\tau(x,\xi))d\tau\right)e^{\ka s}\,,
\ee
so that
$$
\ba
|X_t(x,\xi)-x|&\le\ka t+\ka\int_0^te^{\ka s}ds\left(|\xi|+\int_0^th(X_\tau(x,\xi))d\tau\right)
\\
&\le\ka t+e^{\ka t}\left(|\xi|+\int_0^th(X_\tau(x,\xi))d\tau\right)\,.
\ea
$$
Since $h$ is sublinear at infinity, we have, for every $R>0$
\be\lb{h<}
h(r)\le\indc_{[0,R]}(r)\sup_{0\le r\le R}h(r)+\indc_{(R,+\infty)}(r)r\sup_{r>R}\frac{h(r)}{r}\eta\le M_R+rm_R\,,
\ee
where 
$$
M_R=\sup_{0\le r\le R}h(r)\quad\hbox{ and }m_R=\sup_{r>R}\frac{h(r)}{r}\,,
$$
so that
$$
m_R\to 0\quad\hbox{ as }R\to+\infty\,.
$$
Therefore
$$
\ba
|X_t(x,\xi)-x|&\le(\ka+M_Re^{\ka t})t+e^{\ka t}|\xi|+m_Re^{\ka t}\int_0^t|X_s(x,\xi)|ds
\\
&\le(\ka+M_Re^{\ka t})t+e^{\ka t}|\xi|+m_Re^{\ka t}|x|+m_Re^{\ka t}\int_0^t|X_s(x,\xi)-x|ds\,.
\ea
$$
By Gronwall's inequality, 
\be\lb{X<}
\ba
|X_t(x,\xi)-x|&\le((\ka+M_Re^{\ka t})t+e^{\ka t}|\xi|+m_Re^{\ka t}|x|)e^{tm_Re^{\ka t}}
\\
&\le\ka te^{tm_Re^{\ka t}}+M_Rte^{t(\ka+m_Re^{\ka t})}+|\xi|e^{t(\ka+m_Re^{\ka t})}+m_R|x|e^{t(\ka+m_Re^{\ka t})}\,.
\ea
\ee
The same estimates hold for $-T\le t\le 0$ after substituting $|t|$ to $t$.

In view of (\ref{X<})-(\ref{Xi<}), for each $(x,\xi)\in\bR^N\times\bR^N$, the trajectory $(x,\xi)\mapsto\Phi_t(x,\xi)$ cannot escape to infinity in finite time,
and is therefore globally defined. 

Besides, since $m_R\to 0$ as $R\to+\infty$, the estimate (\ref{X<}) obviously implies the first inequality in the lemma with 
$$
\eta:=m_Re^{T(\ka+m_Re^{\ka T})}\quad\hbox{ and }C_\eta:=(1+\ka T+M_RT)e^{T(\ka+m_Re^{\ka T})}\,.
$$

Since $H\in C^2(\bR^N\times\bR^N)$, the map $(t,x,\xi)\mapsto\Phi_t(x,\xi)$ is of class $C^1$ on its domain of definition $\bR\times\bR^N\times\bR^N$. 
Differentiating the Hamilton equations with respect to the initial condition, one finds that
$$
\left\{
\ba
{}&\dot{DX}_t=+\grad^2_{x,\xi}H(\Phi_t)\cdot DX_t+\grad^2_{\xi,\xi}H(\Phi_t)\cdot D\Xi_t\,,
\\
&\dot{D\Xi}_t\,=-\grad^2_{x,x}H(\Phi_t)\cdot DX_t-\grad^2_{x,\xi}H(\Phi_t)\cdot D\Xi_t\,,
\ea
\right.
$$
so that
$$
|D\Phi_t-\Id_{\bR^N\times\bR^N}|\le\ka\int_0^{|t|}|D\Phi_s|ds\,.
$$
The second inequality in the lemma follows from Gronwall's inequality.
\end{proof}

\section{Proofs of Theorems \ref{T-NURough} and \ref{T-NUC1} and of Corollary \ref{C-NURough}}\lb{S-N}

We shall need the following more or less classical topological argument.

\begin{Lem}\lb{L-Top}
Let $g:\,\bR^N\to\bR^N$ be a continuous map satisfying the following condition: for some $R>0$
$$
(g(x)|x)>0\quad\hbox{ for all }x\in\bR^N\hbox{ such that }|x|=R\,.
$$
Then 

\smallskip
\noindent
a) there exists $x\in\bR^N$ such that $|x|\le R$ and $g(x)=0$;

\noindent
b) if $g$ is of class $C^1$ on $\bR^N$ and $0$ is a regular value of $g$, then $g^{-1}(\{0\})\cap B(0,R)$ is finite and $\#(g^{-1}(\{0\})\cap B(0,R))$ is odd.
\end{Lem}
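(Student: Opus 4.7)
The plan is to derive both statements from Brouwer degree theory. The radial hypothesis $(g(x)|x) > 0$ on the sphere of radius $R$ means $g$ is homotopic to the identity on that sphere, so the Brouwer degree of $g$ on $B(0,R)$ at $0$ equals $1$; statements (a) and (b) are two different translations of this single fact.

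For (a), I would argue by contradiction using Brouwer's fixed-point theorem, which avoids invoking degree theory. If $g$ has no zero on $\overline{B(0,R)}$, then $h(x) := -R g(x)/|g(x)|$ is a continuous self-map of $\overline{B(0,R)}$, hence has a fixed point $x_*$. The equation $x_* = -R g(x_*)/|g(x_*)|$ forces $|x_*| = R$ and $(g(x_*)|x_*) = -R|g(x_*)| < 0$, contradicting the hypothesis.

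For (b), finiteness is immediate: the hypothesis excludes zeros on $\partial B(0,R)$, and because $0$ is a regular value, the inverse function theorem makes the remaining zeros isolated; in the compact set $\overline{B(0,R)}$ they must then be finite in number. For the parity, I would invoke Brouwer degree. The affine homotopy $H_s(x) := (1-s)x + s g(x)$ satisfies $(H_s(x)|x) = (1-s)R^2 + s(g(x)|x) > 0$ whenever $|x| = R$ and $s \in [0,1]$, so $H_s$ never vanishes on the boundary. Homotopy invariance of the degree then gives $\deg(g, B(0,R), 0) = \deg(\mathrm{id}, B(0,R), 0) = 1$. On the other hand, since $0$ is a regular value,
\[
\deg(g, B(0,R), 0) = \sum_{y \in g^{-1}(\{0\}) \cap B(0,R)} \mathrm{sgn}\,\Det(Dg(y)).
\]
Each summand is $\pm 1$, so the sum has the same parity as its cardinality; since the sum equals $1$, the cardinality is odd.

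The only real subtlety, rather than a genuine obstacle, is bookkeeping at the boundary: the positivity of $(g(\cdot)|\cdot)$ on $|x|=R$ is what simultaneously makes $h$ well-defined in (a), makes the homotopy $H_s$ nonvanishing on $\partial B(0,R)$ in (b), and excludes zeros from the boundary so that the degree formula can be applied. All three checks follow directly from the single hypothesis, so no further work should be required beyond citing the standard Brouwer fixed-point and degree-theory results.
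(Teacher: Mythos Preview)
Your argument is correct and essentially coincides with the paper's for part (b): the paper uses the same linear homotopy $G(t,x)=tx+(1-t)g(x)$, checks it is nonvanishing on the sphere of radius $R$ via $(G(t,x)|x)>0$, invokes homotopy invariance to obtain $d(g,B(0,R),0)=1$, and then reads off the parity from the signed-count formula for the degree at a regular value.

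The only genuine difference is in part (a). The paper simply deduces existence of a zero from $d(g,B(0,R),0)=1\neq 0$, since the degree machinery is already in place for (b). Your contradiction argument via Brouwer's fixed-point theorem applied to $h(x)=-Rg(x)/|g(x)|$ is a clean, self-contained alternative that avoids setting up degree theory for (a) alone; it is slightly more elementary in isolation, though less economical once (b) is also needed.
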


\begin{proof}
Consider the homotopy $G\in C([0,1]\times\bR^N;\bR^N)$ defined by 
$$
G(t,x)=tx+(1-t)g(x)\,.
$$
One has
$$
G(t,x)\not=0\quad\hbox{ whenever }t\in[0,1]\hbox{ and }|x|=R\,.
$$
Indeed, $G(1,x)=x\not=0$ if $|x|=R>0$; besides, if $t\in[0,1[$ and $G(t,x)=0$, one has 
$$
g(x)=-\frac{t}{1-t}x\quad\hbox{ so that }(g(x)|x)=-\frac{t}{1-t}|x|^2=-\frac{t}{1-t}R^2<0
$$
for all $x\in\bR^N$ such that $|x|=R$, which contradicts our assumption. 

By the homotopy invariance of the degree (see  Properties 7, 8 and Theorem 12.7 in chapter 12, \S A of \cite{Smoller})
$$
d(g,B(0,R),0)=d(I,B(0,R),0)=1\,.
$$
This implies a). 

Moreover, if $g$ is of class $C^1$ on $\bR^N$ and $0$ is a regular value of $g$, all the elements of $g^{-1}(\{0\})$ are isolated points by the implicit function 
theorem, so $g^{-1}(\{0\})\cap\overline{B(0,R)}$ is finite. Besides (see  Property 2 in chapter 12, \S A of \cite{Smoller})
$$
d(g,B(0,R),0)=\sum_{x\in g^{-1}(\{0\})\cap B(0,R)}\Sign(\Det Dg(x))=1\,.
$$
Therefore, there exists an integer $m\in\bN$ such that
$$
\ba
\#\{x\in B(0,R)\,|\,g(x)=0\hbox{ and }\Det(Dg(x))>0\}&=m+1
\\
\#\{x\in B(0,R)\,|\,g(x)=0\hbox{ and }\Det(Dg(x))<0\}&=m
\ea
$$
so that $\#(g^{-1}(\{0\})\cap B(0,R))=2m+1$, which proves b).
\end{proof}

\subsection{Proof of Theorem \ref{T-NURough}}

The map $F_t$ is continuous, being the composition of the continuous maps $y\mapsto (y,U^{in}(y))$ and $(x,\xi)\mapsto X_t(x,\xi)$. By the first inequality in 
Lemma \ref{L-HamFlow} and the condition (\ref{Sublin}) on $U^{in}$, for each $\eta>0$, one has
$$
\varlimsup_{|y|\to+\infty}\sup_{|t|\le T}\frac{|F_t(y)-y|}{|y|}\le\eta\,,
$$
which is precisely the estimate in statement (a). 

This estimate implies 
\be\lb{Deg<}
(F_t(y)-x|y)=|y|^2+o(|y|^2)\quad\hbox{ as }|y|\to+\infty\,,
\ee
so that $F_t$ is onto by applying statement (a) in Lemma \ref{L-Top} to the map $g:\,y\mapsto F_t(y)-x$. 

It also implies
$$
|F_t(y)|\to+\infty\quad\hbox{ as }|y|\to+\infty
$$
so that $F_t$ is proper. This establishes statement (a).

By the second estimate in Lemma \ref{L-HamFlow}
\be\lb{BdDFty}
|D_xX_t(y,U^{in}(y))+D_\xi X_{t}(y,U^{in}(y))\cdot DU^{in}(y)|\le e^{\ka|t|}+(e^{\ka|t|}-1)|DU^{in}(y)|
\ee
so that
\be\lb{BdJ}
\ba
J_t(y)&=|\Det(D_xX_t(y,U^{in}(y))+D_\xi X_{t}(y,U^{in}(y))\cdot DU^{in}(y))|
\\
&\le e^{N\ka|t|}(1+(1-e^{-\ka|t|})|DU^{in}(y)|)^N
\ea
\ee
by Hadamard's inequality. Since $U^{in}$ satisfies (\ref{CondLN1}) and since $L^{N,1}(B(0,R))\subset L^N(B(0,R))$ for each $R>0$, this inequality implies 
statement (b).

Since the map $F_t$ is proper by statement (a), the set $K_{t,m}=F_t^{-1}(\overline{B(0,m)})$ is compact for each $m\in\bN^*$. Applying the area formula 
(Theorem 3.4 in \cite{Maly3} and Theorem A in \cite{KKM}), one has
$$
\int_{\bR^N}\#(F_t^{-1}(\{x\})\cap K_{t,m})dx=\int_{K_{t,m}}J_t(y)dy<+\infty\,.
$$
Therefore $\# F_t^{-1}(\{x\})<\infty$ for a.e. $x\in\overline{B(0,m)}$, i.e. for all $x\in\overline{B(0,m)}\setminus E_m$ with $\scrL^N(E_m)=0$. Thus
$$
\# F_t^{-1}(\{x\})<\infty\hbox{ for all }x\in\bR^N\setminus\bigcup_{m\ge 1}E_m
$$
and 
$$
\scrL^N\left(\bigcup_{m\ge 1}E_m\right)\le\sum_{m\ge 1}\scrL^N(E_m)=0\,,
$$
which is statement (c).

Next we prove statement (d). Let $R>0$; applying again the area formula shows that 
$$
\int_{\bR^N}\cN_R(t,x)dx=\int_{\overline{B(0,R)}}J_t(y)dy\,.
$$
By the Bienaym\'e-Chebyshev inequality, for each $n\ge 1$
$$
\scrL^N\left(\{x\in\bR^N\hbox{ s.t. }\cN_R(t,x)\ge n\}\right)\le\frac1n\int_{\overline{B(0,R)}}J_t(y)dy\,.
$$
In view of (\ref{BdJ}), one has
$$
\ba
\int_{\overline{B(0,R)}}J_t(y)dy&\le e^{N\ka|t|}\int_{\overline{B(0,R)}}(1+(1-e^{-\ka|t|})|DU^{in}(y)|)^Ndy
\\
&\le e^{N\ka|t|}\|1+|DU^{in}(y)|\|_{L^N(B(0,R))}^N\,.
\ea
$$
With the inequality above, this is precisely the estimate in statement (d).

Let $m\in\bN^*$ and $B_{t,m}=(Z_t\cup E)\cap K_{t,m}$ with $Z_t$ as in (\ref{DefZP}) and $K_{t,m}$ as in the proof of statement (c). Thus the set $B_{t,m}$ is 
measurable and bounded. Applying the area formula as in the proof of statement (c) shows that
$$
\int_{\bR^N}\#(F_t^{-1}(\{x\})\cap B_{t,m})dx=\int_{B_{t,m}}J_t(y)dy=0
$$
since $J_t(y)=0$ for all $y\in B_{t,m}\setminus E$, i.e. for a.e. $y\in B_{t,m}$. By the Bienaym\'e-Chebyshev inequality,
$$
\scrL^N(\{x\in\bR^N\hbox{ s.t. }\#(F_t^{-1}(\{x\})\cap B_{t,m})\ge 1\})=0
$$
and therefore
$$
\ba
\scrL^N(C_t)&=\scrL^N(\{x\in\bR^N\hbox{ s.t. }F_t^{-1}(\{x\})\cap(Z_t\cup E)\not=\varnothing\})
\\
&=\scrL^N(\{x\in\bR^N\hbox{ s.t. }\#(F_t^{-1}(\{x\})\cap(Z_t\cup E))\ge 1\})
\\
&\le\sum_{m\ge 1}\scrL^N(\{x\in\bR^N\hbox{ s.t. }\#(F_t^{-1}(\{x\})\cap B_{t,m})\ge 1\})\,,
\ea
$$
which is precisely statement (e).

Next consider the continuous map 
$$
F:\,[-T,T]\times\bR^N\ni(t,y)\mapsto F(t,y)\in\bR^N\,.
$$
In view of statement (a), $|F(t,y)|\to\infty$ as $|y|\to+\infty$ uniformly in $t\in[-T,T]$. Therefore, the set $K_m:=F^{-1}(\overline{B(0,m)})$ is compact for 
each $m\in\bN^*$. Besides, for each $t\in[-T,T]$ and each $y\in\bR^N\setminus E$, the Jacobian $DF(t,y)$ is the column-wise partitioned matrix
$$
DF(t,y)=[V(t,y)\,,\,M(t,y)]\,,
$$
with
$$
V(t,y)=\grad_\xi H(\Phi_t(y,U^{in}(y)))
$$
and
$$
M(t,y):=D_xX_t(y,U^{in}(y))+D_\xi X_t(y,U^{in}(y))DU^{in}(y)\,.
$$
Therefore,
$$
DF(t,y)DF(t,y)^T=V(t,y)V(t,y)^T+M(t,y)M(t,y)^T
$$
so that, by the co-area formula (Theorem 1.3 in \cite{Maly})
$$
\ba
\int_{\bR^N}\scrH^1&(F^{-1}(\{x\})\cap K_m)dx
\\
&=\int_{K_m}\sqrt{\Det(V(t,y)V(t,y)^T+M(t,y)M(t,y)^T)}dtdy\,.
\ea
$$
By Lemma \ref{L-HamFlow}, $(t,x,\xi)\mapsto\Phi_t(x,\xi)$ is of class $C^1$ on $\bR\times\bR^N\times\bR^N$, so that the map $(t,y)\mapsto V(t,y)$ is
continuous on $\bR\times\bR^N$, and therefore bounded on the compact $K_m$. On the other hand, by (\ref{BdDFty})
$$
\sup_{|t|\le T}|M(t,y)|\le e^{\ka T}+(e^{\ka T}-1)|DU^{in}(y)|\in L^N_{loc}(\bR^N)\,,
$$
since $U^{in}$ satisfies (\ref{CondLN1}). Denoting
$$
K'_m:=\{y\in\bR^N\,|\,\hbox{ there exists }t\in[-T,T]\hbox{ s.t. }(t,y)\in K_m\}
$$
which is compact in $\bR^N$ (being the projection of the compact $K_m$ on the second factor in $\bR\times\bR^N$), one has
$$
\ba
\|VV^T+MM^T\|^{N/2}_{L^{N/2}(K_R)}&\le 2^{N/2-1}\|V\|^N_{L^\infty(K_R)}\scrL^{N+1}(K_R)+2^{N/2}T\|M\|^N_{L^N(K'_R)}
\\
&<\infty\,.
\\
\ea
$$
Therefore, $\scrH^1(F^{-1}(\{x\})\cap K_m)<+\infty$ is finite for a.e. $x\in\overline{B(0,m)}$ for each $m\ge 1$, i.e. for all $x\in\overline{B(0,m)}\setminus E'_m$ 
with $\scrL^N(E'_m)=0$. Since this is true for all $m\in\bN^*$, one concludes that
$$
\scrH^1(F^{-1}(\{x\}))<+\infty\hbox{ for all }x\in\bR^N\setminus\left(\bigcup_{m\ge 1}E'_m\right)\,,
$$
and
$$
\scrL^N\left(\bigcup_{m\ge 1}E'_m\right)\le\sum_{m\ge 1}\scrL^N(E'_m)=0\,,
$$
which is statement (f). The proof is complete.

\subsection{Proof of Corollary \ref{C-NURough}}

By statement (a) of Theorem \ref{T-NURough}, $M_T(R)\to 0$ as $R\to\infty$. Therefore there exists $R_T^*>0$ such that $M_T(R_T^*)<\tfrac12$. Since the 
function $M_T$ is nonincreasing by construction, $M_T(R)\le\tfrac12$ for all $R\ge R_T^*$. Therefore, if $R\ge R_T^*$, then 
$$
|y|\ge R\Rightarrow||F_t(y)|-|y||\le|F_t(y)-y|\le\tfrac12|y|\Rightarrow|F_t(y)|\ge\tfrac12|y|
$$
for all $t\in[-T,T]$, so that
$$
F_t(\bR^N\setminus\overline{B(0,R)})\subset\bR^N\setminus\overline{B(0,\tfrac12R)}\,.
$$
In other words, if $t\in[-T,T]$ and if $R\ge R_T^*$, then
$$
F_t^{-1}(\{x\})\subset\overline{B(0,R)}\hbox{ for all }x\in\overline{B(0,\tfrac12R)}\,,
$$
so that
$$
|t|\le T\hbox{ and }|x|\le\tfrac12R\Rightarrow\cN(t,x)=\cN_R(t,x)\,.
$$
This proves statement (a).

Thus, for each $t\in[-T,T]$,
$$
\ba
\scrL^N\left(\{x\in\overline{B(0,\tfrac12R)}\hbox{ s.t. }\cN(t,x)\ge n\}\right)&
\\
=
\scrL^N\left(\{x\in\overline{B(0,\tfrac12R)}\hbox{ s.t. }\cN_R(t,x)\ge n\}\right)&
\\
\le
\scrL^N\left(\{x\in\bR^N\hbox{ s.t. }\cN_R(t,x)\ge n\}\right)&\,,
\ea
$$
and we conclude by statement (d) in Theorem \ref{T-NURough}.

\subsection{Proof of Theorem \ref{T-NUC1}}

Since $U^{in}$ is of class $C^1$ on $\bR^N$, the map $F_t$ is of class $C^1$ on $\bR^N$ for each $t\in\bR$, being the composition of the $C^1$ maps 
$(x,\xi)\mapsto X_t(x,\xi)$ and $y\mapsto(y,U^{in}(y))$. In particular, the nondifferentiability set in (\ref{DefE}) is $E=\varnothing$ so that the caustic fiber $C_t$ 
reduces to
$$
C_t=\{x\in\bR^N\hbox{ s.t. }F_t^{-1}(\{x\})\cap Z_t\not=\varnothing\}=F_t(Z_t)\,.
$$
Since $F_t$ is of class $C^1$ on $\bR^N$, the absolute value of its Jacobian determinant $J_t$ is continuous on $\bR^N$, and therefore $Z_t=J_t^{-1}(\{0\})$
is closed in $\bR^N$. By statement (a) in Theorem \ref{T-NURough}, the continuous map $F_t$ is proper; therefore $C_t=F_t(Z_t)$ is closed in $\bR^N$, being 
the image of a closed set by a continuous and proper map on $\bR^N$. This proves statement (a).

Let $t\in\bR$. For each $x\in\bR^N$, the set $F_t^{-1}(\{x\})$ is compact since $F_t$ is proper. If moreover $x\in\bR^N\setminus C_t$, all the solutions of the 
equation $F_t(y)-x=0$ are isolated by the implicit function theorem. Therefore the set $F_t^{-1}(\{x\})$ is finite for each $x\in\bR^N\setminus C_t$. Besides, the
implicit function theorem implies that the integer-valued counting function $\cN$ is a locally constant function of $(t,x)\in\bR\times\bR^N\setminus C$. Thus
the counting function $\cN$ is constant on each connected component of $\bR\times\bR^N\setminus C$. This completes the proof of statement (b).

Let $j\in\bN^*$, and let $\Om$ be a connected component of $\bR\times\bR^N\setminus C$ such that $\cN(t,x)=n\ge j$ for all $(t,x)\in\Om$. Then the implicit 
function theorem implies that, for all $(t,x)\in\Om$, the set of solutions $y$ of the equation $F_t(y)-x=0$ takes the form $\{y_k(t,x)\,|\,1\le k\le n\}$, and one has
$y_k\in C^1(\Om)$ for all $k=1,\ldots,n$. This proves statement (c).

Assume $\inf\{t>0\,|\,C_t\not=\varnothing\}=0$. Then, there exists $(t_n,x_n,y_n)$ such that 
$$
t_n\to 0^+\,,\quad F_{t_n}(y_n)=x_n\,,\quad\hbox{ and }J_{t_n}(y_n)=0\,.
$$
Assume that some subsequence $y_{n_k}$ of the sequence $y_n$ is bounded. Up to further extraction of a subsequence, one can assume that $y_{n_k}\to y$, 
so that $0=J_{t_{n_k}}(y_{n_k})\to J_0(y)$. But since $F_0=\Id_{\bR^N}$, one has $J_0(y)=1$. Therefore $|y_n|\to+\infty$. By the second inequality in
Lemma \ref{L-HamFlow}
$$
\ba
{}&|D_xX_{t_n}(y_n,U^{in}(y_n))-\Id_{\bR^N}|\le e^{\ka|t_n|}-1\,,
\\
&|D_\xi X_{t_n}(y_n,U^{in}(y_n))\cdot DU^{in}(y_n)|=O\left(e^{\ka|t_n|}-1\right)\,,
\ea
$$
so that
$$
\ba
0=J_{t_n}(y_n)&=|\Det(D_xX_{t_n}(y_n,U^{in}(y_n))+D_\xi X_{t_n}(y_n,U^{in}(y_n))\cdot DU^{in}(y_n))|
\\
&\to|\Det(\Id_{\bR^N})|=1\quad\hbox{ as }n\to\infty\,.
\ea
$$
Thus the assumption $t_n\to 0$ leads to a contradiction. Therefore, 
$$
\inf\{t>0\,|\,C_t\not=\varnothing\}=b>0\,.
$$
By the same token, 
$$
\sup\{t<0\,|\,C_t\not=\varnothing\}=a<0\,.
$$
Thus $(a,b)\times\bR^N$ is contained in the connected component of $\{0\}\times\bR^N$ in $\bR\times\bR^N\setminus C$. Since $F_0=\Id_{\bR^N}$, one has 
$\cN(0,x)=1$ for all $x\in\bR^N$, and since $\cN$ is constant on each connected component of $\bR\times\bR^N\setminus C$, one concludes $\cN=1$ on the
strip $(a,b)\times\bR^N$, which proves statement (d).

As for statement (e), it follows from the inequality (\ref{Deg<}) implied by statement (a) in Theorem \ref{T-NURough} and from statement (b) in Lemma \ref{L-Top}
applied to the map $g:\,y\mapsto F_t(y)-x$. Indeed, $0$ is a regular value of this map since it is assumed that $x\in\bR^N\setminus C_t$. The proof is complete.

\section{Proofs of Theorems \ref{T-Mu(t)=}, \ref{T-DecompRhot} and \ref{T-Atom}}\lb{S-Mu}

\subsection{Proof of Theorem \ref{T-Mu(t)=}}

For each $t\in\bR$, the Borel measure $\rho(t)$ is a probability measure on $\bR^N$, so that $\rho(t)(C_t)=0$ if and only if $\rho(t)(\bR^N\setminus C_t)=1$.
Next, observe that
\be\lb{TranspRho}
\rho(t)=\Pi\#\mu(t)=\Pi\#(\Phi_t\#\mu^{in})=X_t\#\mu^{in}=F_t\#(\rho^{in}\scrL^N)
\ee
since $\mu^{in}$ is a monokinetic measure with density $\rho^{in}$ and momentum profile $U^{in}$ while $F_t(y)=X_t(y,U^{in}(y))$. Thus 
$$
\rho(t)(C_t)=\rho^{in}\scrL^N(F_t^{-1}(C_t))=\int_{Z_t}\rho^{in}(y)dy=0
$$
if and only if $\rho^{in}=0$ a.e. on $Z_t$, which proves statement (a).

Assume that $\rho^{in}=0$ a.e. on $Z_t$, and consider the measurable function defined a.e. on $\bR^N$ by the formula
\be\lb{Defb}
b(y):=\left\{\ba{}&\frac{\rho^{in}(y)}{J_t(y)}&&\qquad\hbox{ for a.e. }y\in P_t\,,\\ &0&&\qquad\hbox{ for each }y\notin P_t\,.\ea\right.
\ee
With this definition, one has
$$
\rho^{in}\scrL^N=bJ_t\scrL^N\,.
$$
In particular $\rho^{in}\scrL^N\ll J_t\scrL^N$, so that the class of the measurable function $b$ modulo equality $J_t\scrL^N$-a.e. is the unique element of 
$L^1(\bR^N;J_t\scrL^N)$ such that the equality above holds, by the Radon-Nikodym theorem (Theorem 6.10 in \cite{Rudin}).

Let $\chi\in C_c(\bR^N)$. By (\ref{TranspRho}) and the area formula (see Theorem 3.4 in \cite{Maly3} and Theorem A in \cite{KKM})
$$
\ba
\la\rho(t),\chi\ra=\la F_t\#(\rho^{in}\scrL^N),\chi\ra&=\int_{\bR^N}\chi(F_t(y))\rho^{in}(y)dy
\\
&=\int_{\bR^N}\chi(F_t(y))b(y)J_t(y)dy
\\
&=\int_{\bR^N}\left(\sum_{y\in F^{-1}_t(\{x\})}b(y)\right)\chi(x)dx\,.
\ea
$$
By Theorem \ref{T-NURough} (c), the set $F^{-1}_t(\{x\})$ is finite for a.e. $x\in\bR^N$, so that the formula
$$
\b(x):=\sum_{y\in F^{-1}_t(\{x\})}b(y)
$$
gives a measurable function $\b$ defined a.e. on $\bR^N$, and $\rho(t)=\b\scrL^N$. In particular 
$$
\rho(t)\ll\scrL^N\,,\quad\hbox{ and }\,\,\frac{d\rho(t)}{d\scrL^N}=\b\quad\hbox{ a.e. on }\bR^N\,.
$$
In view of the formula giving $b$, this is precisely statement (b). 

Let $\psi\in C_c(\bR^N_x\times\bR^N_\xi)$. Then
$$
\la\mu(t),\psi\ra=\la\mu^{in},\psi\circ\Phi_t\ra=\int_{\bR^N}\psi(F_t(y),\Xi_t(y,U^{in}(y)))\rho^{in}(y)dy\,.
$$
In terms of the function $b$ defined above, one has
$$
\la\mu(t),\psi\ra=\int_{\bR^N}\psi(F_t(y),\Xi_t(y,U^{in}(y)))b(y)J_t(y)dy
$$
and applying the area formula as above shows
$$
\ba
\la\mu(t),\psi\ra&=\int_{\bR^N}\left(\sum_{y\in F^{-1}_t(\{x\})}b(y)\psi(x,\Xi_t(y,U^{in}(y)))\right)dx
\\
&=\int_{\bR^N}\left(\sum_{y\in F^{-1}_t(\{x\})}b(y)\la\de_{\Xi_t(y,U^{in}(y))},\psi(x,\cdot)\ra\right)dx\,.
\ea
$$
This shows that $\mu(t)$ has a disintegration with respect to the Lebesgue measure $\scrL^N_x$ on $\bR^N_x$ and the canonical projection $\Pi$ given by
$$
\mu(t,x,\cdot):=\sum_{y\in F^{-1}_t(\{x\})}b(y)\de_{\Xi_t(y,U^{in}(y))}\quad\hbox{ for a.e. }x\in\bR^N\,.
$$
(Notice that the set $F^{-1}_t(\{x\})$ is finite for a.e. $x\in\bR^N$ by Theorem \ref{T-NURough} (c).) In view of the formula defining the measurable function
$b$, this last equality is precisely the conclusion in statement (c). The proof is complete.

\subsection{Proof of Theorem \ref{T-DecompRhot}}

Let $\psi\in C_c(\bR^N_x\times\bR^N_\xi)$ satisfy $\Supp(\psi)\cap\Lambda_t=\varnothing$. Then
$$
\la\mu(t),\psi\ra=\la\Phi_t\#\mu^{in},\psi\ra=\la\mu^{in},\psi\circ\Phi_t\ra=\int_{\bR^N}\psi(\Phi_t(y,U^{in}(y)))\rho^{in}(y)dy=0\,.
$$
Indeed
$$
\ba
\Supp(\psi)\cap\Lambda_t=\varnothing\Rightarrow\psi(\Lambda_t)=\{0\}&\Rightarrow\psi\circ\Phi_t(\Lambda^{in})=\{0\}
\\
&\Rightarrow\psi(\Phi_t(y,U^{in}(y)))=0\hbox{ for all }y\in\bR^N\,.
\ea
$$
Since $\la\mu(t),\psi\ra=0$ for each $\psi\in C_c(\bR^N_x\times\bR^N_\xi)$ such that $\Supp(\psi)\cap\Lambda_t=\varnothing$, one concludes that
$\Supp(\mu(t))\subset\Lambda_t$, which is statement (a).

Since $\scrL^N(E)=0$, one has $\rho^{in}=\rho^{in}\indc_{P_t}+\rho^{in}\indc_{Z_t}$ a.e. in $\bR^N$, and therefore
$$
\rho(t)=\rho_a(t)+\rho_s(t)\hbox{ with }\rho_a(t):=F_t\#(\rho^{in}\indc_{P_t}\scrL^N)\hbox{ and }\rho_s(t):=F_t\#(\rho^{in}\indc_{Z_t}\scrL^N)\,.
$$
By statement (b) in Theorem \ref{T-Mu(t)=}, one has $\rho_a(t)\ll\scrL^N$. 

Let us check that $\rho_s(t)$ is carried by $C_t$. Let $A\subset\bR^N$; then
$$
A\cap C_t=\varnothing\Rightarrow F_t^{-1}(A)\cap Z_t\subset F_t^{-1}(A)\cap(Z_t\cup E)=\varnothing\,,
$$
and
$$
\rho_s(t)(A)=\int_{F^{-1}_t(A)\cap Z_t}\rho^{in}(y)dy=0\,.
$$
Hence $\rho_s(t)$ is carried by $C_t$, and since $\scrL^N(C_t)=0$ by statement (e) in Theorem \ref{T-NURough}, we conclude that $\rho_s(t)\perp\scrL^N$. 
Thus, with $\rho_a(t)$ and $\rho_s(t)$ so defined, one has
$$
\rho_a(t)\ll\scrL^N\,,\quad\rho_s(t)\perp\scrL^N\,,\quad\hbox{ and }\rho(t)=\rho_a(t)=\rho_s(t)\,.
$$
Thus the pair $(\rho_a(t),\rho_s(t))$ is the Lebesgue decomposition of $\rho(t)$ with respect to $\scrL^N$, which is precisely statement (b). (For the uniqueness 
of the Lebesgue decomposition, see Theorem 6.10 (a) in \cite{Rudin}.) This completes the proof of Theorem \ref{T-DecompRhot}.

\subsection{Proof of Theorem \ref{T-Atom}}

If $x\in A_t$, then $\scrL^N(F_t^{-1}(\{x\})\cap Z_t)>0$; hence $\varnothing\not=F_T^{-1}(\{x\})\cap Z_t\subset F_T^{-1}(\{x\})\cap(Z_t\cup E)$ so that $x\in C_t$,
which proves statement (a).

Using (\ref{TranspRho}) and the Lebesgue decomposition in statement (b) of Theorem \ref{T-DecompRhot} shows that, for each $x\in\bR^N$
$$
\rho_a(t)(\{x\})=0\hbox{ and therefore }\rho(t)(\{x\})=\rho_s(t)(\{x\})=(\rho^{in}\indc_{Z_t}\scrL^N)(F_t^{-1}(\{x\}))\,.
$$
In other words
$$
\rho(t)(\{x\})=\int_{F_t^{-1}(\{x\})\cap Z_t}\rho^{in}(y)dy
$$
Hence
$$
\rho(t)(\{x\})>0\Rightarrow\scrL^{N}(F_t^{-1}(\{x\})\cap Z_t)>0\Rightarrow x\in A_t\,.
$$
Conversely, assume that $\rho^{in}>0$ a.e. on $Z_t$. The expression above for $\rho(t)(\{x\})$ shows that
$$
0=\rho(t)(\{x\})=\int_{Z_t}\rho^{in}(y)\indc_{F_t^{-1}(\{x\})}(y)dy\Rightarrow\indc_{F_t^{-1}(\{x\})}(y)=0\hbox{ for a.e. }y\in Z_t\,.
$$
In other words, $\scrL^N(F_t^{-1}(\{x\})\cap Z_t)=0$ and therefore $x\notin A_t$. This proves statement (b).

Let $\rho^{in}$ be a probability density such that $\rho^{in}>0$ a.e. on $Z_t$. By statement (b), 
$$
A_t=\{x\in\bR^N\,|\,\rho(t)(\{x\})>0\}=\bigcup_{n\ge 1}\left\{x\in\bR^N\,|\,\rho(t)(\{x\})\ge\frac1n\right\}\,.
$$
By the Bienaym\'e-Chebyshev theorem
$$
\#\left\{x\in\bR^N\,|\,\rho(t)(\{x\})\ge\frac1n\right\}\le n\rho(t)(\bR^N)=n
$$
so that $A_t$ is countable, being a denumerable union of finite sets. This proves statement (c) and concludes the proof of Theorem \ref{T-Atom}.

\section{Proof of Theorem \ref{T-WWKB}}\lb{S-Psi}

Denote $g_\eps(x):={(\pi\eps)^{-N/2}}e^{-|x|^2/\eps}$ and $G_\eps(x,\xi):=g_\eps(x)g_\eps(\xi)$. Then, one has
$$
G_\eps(x_0-x,\xi_0-\xi)=W_\eps[\Psi^{x_0,\xi_0}_\eps](x,\xi)\,,
$$
where
$$
\Psi^{x_0,\xi_0}_\eps(x):=(\pi\eps)^{-N/4}e^{-|x-x_0|^2/2\eps}e^{i\xi_0\cdot x/\eps}\,.
$$
Along with the Wigner transform $W_\eps[\psi_\eps(t,\cdot)]$, consider the Husimi transform
$$
\widetilde W_\eps[\psi_\eps(t,\cdot)]=W_\eps[\psi_\eps(t,\cdot)]\star_{x,\xi}G_\eps\,.
$$
A straightforward computation shows that, for each $x_0,\xi_0\in\bR^N$ and each $\eps>0$,
$$
\ba
\widetilde W_\eps[\psi_\eps(t,\cdot)](x_0,\xi_0)=\iint_{\bR^N\times\bR^N}W_\eps[\psi_\eps(t,\cdot)](x,\xi)\overline{W_\eps[\Psi^{x_0,\xi_0}_\eps](x,\xi)}dxd\xi&
\\
=\frac1{(2\pi\eps)^N}|\la\Psi^{x_0,\xi_0}_\eps|\psi_\eps(t,\cdot)\ra|^2\ge 0&\,,
\ea
$$
Therefore
\be\lb{MargiHusi}
\ba
\int_{\bR^N}\widetilde W_\eps[\psi_\eps(t,\cdot)](x_0,\xi_0)d\xi_0&=(g_\eps\star|\psi_\eps(t,\cdot)|^2)(x_0)\hbox{ and }
\\
\int_{\bR^N}\widetilde W_\eps[\psi_\eps(t,\cdot)](x_0,\xi_0)dx&=(g_\eps\star|\cF_\eps\psi_\eps(t,\cdot)|^2)(\xi_0)\,,
\ea
\ee
while
\be\lb{IntHusi}
\ba
\iint_{\bR^N\times\bR^N}\widetilde W_\eps[\psi_\eps(t,\cdot)](x_0,\xi_0)dx_0d\xi_0
=
\iint_{\bR^N\times\bR^N}|\la\Psi^{x_0,\xi_0}_\eps|\psi_\eps(t,\cdot)\ra|^2\frac{dx_0d\xi_0}{(2\pi\eps)^N}&
\\
=\int_{\bR^N}g_\eps\star|\psi_\eps(t,\cdot)|^2(x_0)dx_0
=
\|g_\eps\|_{L^1(\bR^N)}\|\psi_\eps(t,\cdot)\|^2_{L^2(\bR^N)}=1&\,,
\ea
\ee
where the penultimate equality follows from the conservation of the $L^2$ norm under the Schr\"odinger group.

On the other hand
\be\lb{Intmu}
\iint_{\bR^N\times\bR^N}\mu(t,dxd\xi)=\iint_{\bR^N\times\bR^N}\mu(0,dxd\xi)=\|a^{in}\|_{L^2(\bR^N)}=1
\ee
since $\mu(t)$ is the push-forward of the probability measure $\mu(0)$ under the Hamiltonian flow of $\tfrac12|\xi|^2+V(x)$.

Since $\widetilde W_\eps[\psi_\eps(t,\cdot)]\ge 0$ and
\be\lb{WConvHusi}
\iint_{\bR^N\times\bR^N}\widetilde W_\eps[\psi_\eps(t,\cdot)](x,\xi)\chi(x,\xi)dxd\xi\to\iint_{\bR^N\times\bR^N}\chi(x,\xi)\mu(t,dxd\xi)
\ee
for each $\chi\in C_c(\bR^N\times\bR^N)$ by Theorem III.1 (1) in \cite{LionsPaul}, we conclude from (\ref{IntHusi})-(\ref{Intmu}) that the convergence 
(\ref{WConvHusi}) holds for each $\chi\in C_b(\bR^N\times\bR^N)$ (see for instance Theorem 6.8 in chapter II of \cite{MalliavInt}).

On the other hand, for each $\chi\in C^1_b(\bR^N)$
$$
\ba
\left|\int_{\bR^N}\chi(x)(|\psi_\eps(t,x)|^2-|\psi_\eps(t,\cdot)|^2\star g_\eps(x))dx\right|&
\\
\le
\int_{\bR^N}|\chi(x)-\chi\star_xg_\eps(x)||\psi_\eps(t,x)|^2dx&
\\
\le
\sqrt{\eps}\|\grad\chi\|_{L^\infty}\int_{\bR^N}|y|g_1(y)dy\to 0&\,,
\ea
$$
and likewise
$$
\left|\int_{\bR^N}\chi(\xi)(|\cF_\eps\psi_\eps(t,\xi)|^2-|\cF_\eps\psi_\eps(t,\cdot)|^2\star g_\eps(\xi))d\xi\right|\to 0\,.
$$

We conclude from (\ref{MargiHusi}) and (\ref{WConvHusi}) that, for each $\chi\in C^1_b(\bR^N)$
\be\lb{LimIntWeps}
\ba
\int_{\bR^N}\chi(x)|\psi_\eps(t,x)|^2dx\to\iint_{\bR^N\times\bR^N}\chi(x)\mu(t,dxd\xi)
\\
\int_{\bR^N}\chi(\xi)|\cF_\eps\psi_\eps(t,\xi)|^2d\xi\to\iint_{\bR^N\times\bR^N}\chi(\xi)\mu(t,dxd\xi)
\ea
\ee
as $\eps\to 0$. On the other hand, 
$$
1=\int_{\bR^N}|\psi_\eps(t,x)|^2dx=\int_{\bR^N}|\cF_\eps\psi_\eps(t,\xi)|^2d\xi=\int_{\bR^N\times\bR^N}\mu(t,dxd\xi)
$$
so that (\ref{LimIntWeps}) holds for each $\chi\in C_b(\bR^N)$ by a standard density argument.

If $\chi(F_t(y))\rho^{in}(y)=0$ for a.e. $y\in Z_t$, one has
$$
\ba
\iint_{\bR^N\times\bR^N}\chi(x)\mu(t,dxd\xi)&=\int_{\bR^N}\chi(x)\rho(t,dx)=\int_{\bR^N}\chi(F_t(y))\rho^{in}(y)dy
\\
&=\int_{\bR^N}\chi(F_t(y))\rho^{in}(y)\indc_{P_t}(y)dy=\int_{\bR^N}\chi(x)\rho_a(t,dx)
\ea
$$
by Theorem \ref{T-DecompRhot}. With the first convergence statement in (\ref{LimIntWeps}) and Theorem \ref{T-Mu(t)=} (b), this equality implies statement (a).

If $\chi(\Xi_t(y,U^{in}(y)))\rho^{in}(y)=0$ for a.e. $y\in Z_t$, one has
$$
\ba
\iint_{\bR^N\times\bR^N}\chi(\xi)\mu(t,dxd\xi)&=\iint_{\bR^N\times\bR^N}\chi(\Xi_t(y,\eta))\mu^{in}(dyd\eta)
\\
&=\int_{\bR^N}\chi(\Xi_t(y,U^{in}(y)))\rho^{in}(y)dy
\\
&=\int_{\bR^N}\chi(\Xi_t(y,U^{in}(y)))\rho^{in}(y)\indc_{P_t}(y)dy\,.
\ea
$$
With the second convergence statement in (\ref{LimIntWeps}) and Theorem \ref{T-Mu(t)=} (c), this equality implies statement (b) and completes the proof
of Theorem \ref{T-WWKB}.

\section{Discussion of the examples}\lb{S-ProofEx}

In this section, we prove some of the statements in the examples presented in section \ref{S-Expl}. 

Example \ref{E-CantorFunc} is based on classical material on the Cantor function which can be found for instance in Exercise 1.6.47 of \cite{TaoMeas}.
Example \ref{E-Atom} is based on a straightforward computation and therefore needs no further discussion.

Examples \ref{E-Diffuse} and \ref{E-HausDim} are more involved and require detailed proofs.

\subsection{Proof of the statements in Example \ref{E-Diffuse}}

By regularity of the Lebesgue measure $\scrL^1$, there exists a compact set $K\subset(0,1)\setminus\bQ$ such that $\tfrac12<\scrL^1(K)\le 1$. Let 
$\Om=(0,1)\setminus K$; since $\Om$ is open in $(0,1)$ and contains $(0,1)\cap\bQ$, it is a countably infinite union of disjoint nonempty open intervals:
$$
\Om=\bigcup_{n\in\bN}I_n\,,\quad\hbox{Êso that }\scrL^1(\Om)=\sum_{n\in\bN}\scrL^1(I_n)\,.
$$
Besides $\scrL^1(I_n)>0$ for each $n\in\bN$ (indeed each $I_n$ is an open interval that contains at least one rational), so that $\l:=\scrL^1(\Om)>0$.
For each $n\ge 1$, we denote by $a_n$ and $b_n$ the endpoints of $I_n$, so that $I_n=(a_n,b_n)$.

Let $\chi\in C^\infty(\bR)$ satisfy the following properties:
$$
\Supp(\chi)\subset[-1,1]\,,\quad\hbox{ and }\chi(x)>0\hbox{ for all }x\in(-1,1)\,.
$$
Let $k\ge 1$; define
$$
g_n(z):=(b_n-a_n)^k\chi\left(\frac{2x-a_n-b_n}{b_n-a_n}\right)\,.
$$
Observe that, for each $j=1,\ldots,k-1$, one has
$$
\ba
\sum_{n\ge 0}\|g_n^{(j)}\|_{L^\infty(\bR)}&=\|\chi^{(j)}\|_{L^\infty(\bR)}\sum_{n\ge 0}2^j(b_n-a_n)^{k-j}
\\
&\le 2^j\|\chi^{(j)}\|_{L^\infty(\bR)}\sum_{n\ge 0}(b_n-a_n)
\\
&=2^j\|\chi^{(j)}\|_{L^\infty(\bR)}\scrL^1(\Om)\le 2^j\|\chi^{(j)}\|_{L^\infty(\bR)}\,,
\ea
$$
so that 
$$
g:=\sum_{n\ge 0}g_n\in C^{k-1}_b(\bR)\,.
$$
Pick $h\in C^\infty(\bR)$ s.t.
$$
h\rstr_{[0,1]}\equiv 0\,,\quad h\rstr_{(-\infty,-1]\cup[2,+\infty)}\equiv 1\hbox{ and }h>0\hbox{ on }\bR\setminus[0,1]\,,
$$
and set $f:=g+h$. Define
$$
U^{in}(y):=\int_0^yf(z)dz-y\quad\hbox{ for each }y\in\bR\,.
$$
By construction $U^{in}\in C^k(\bR)$, and one has
$$
U^{in}(y):=\left\{\ba{}&1-\int_{-1}^0h(z)dz&&\quad\hbox{ for }y<-1\,,\\ &\int_1^2h(z)dz+\int_0^1g(z)dz-2&&\quad\hbox{ for }y>2\,,\ea\right.
$$
so that $U^{in}\in C^k_b(\bR)$ and therefore satisfies the sublinearity condition (\ref{Sublin}).

With $H(x,\xi)=\tfrac12\xi^2$ so that $\Phi_t(x,\xi)=(x+t\xi,\xi)$, one has $F_t(y):=y+tU^{in}(y)$ so that, for $t=1$,
$$
F_1(y)=\int_0^yf(z)dz\quad\hbox{ for each }y\in\bR\,.
$$

\subsubsection{Proof of statement (a)}

Observe that $F_1\in C^k(\bR)$ and that $F_1'(y)=h(y)>0$ for each $y\in(-\infty,0)\cup(1,+\infty)$. On the other hand, for each $y_1,y_2\in[0,1]$ such that
$y_1<y_2$, the nonempty open interval $(y_1,y_2)\subset(0,1)$ contains at least one rational point. Therefore, $(y_1,y_2)\cap\Om\not=\varnothing$. In
particular, there exists a nonempty open interval $(\a,\b)\subset\Om\cap(y_1,y_2)$. Therefore $0<\a<\b<1$ and
$$
F_1(y_2)-F_1(y_1)\ge\int_\a^\b g(y)dy>0\,.
$$
Therefore $F_1$ is increasing on $\bR$. In particular, $F_1$ is one-to-one.

On the other hand $F_1(y)\sim y$ as $|y|\to\infty$ since $U^{in}$ is bounded on $\bR$. In particular
$$
F_1(y)\to-\infty\hbox{ as }y\to-\infty\,,\quad\hbox{ and }F_1(y)\to+\infty\hbox{ as }y\to+\infty\,.
$$
Since $F_1$ is continuous on $\bR$, it is onto by the intermediate values theorem.

\subsubsection{Proof of statement (b)}

By construction $F_1'(y)=h(y)>0$ if $y<0$ or if $y>1$. On the other hand, for each $y\in[0,1]$, one has $F_1'(y)=g(y)$, so that $F_1'(y)>0$ for all $y\in\Om$, 
while $F_1'(y)=0$ whenever $y\in K\cup\{0,1\}$. 

Since $F_1$ is one-to-one, the set of critical values of $F_1$, i.e. the caustic fiber $C_1$ at time $t=1$ is $C_1=F_1(K\cup\{0,1\})$.

\subsubsection{Conclusion}

Set $\rho^{in}=\tfrac1{1-\l}\indc_K$ and $\rho(t)=F_t\#(\rho^{in}\scrL^1)$ for each $t\in\bR$. In particular $\rho(1)=F_1\#\rho^{in}$ and 
$$
1=\rho(1)(\bR)\ge\rho(1)(C_1)=\|\rho^{in}\|_{L^1}=\frac1{1-\l}\scrL^1(K)=1\,.
$$
Since $\scrL^1(C_1)=0$ by Sard's theorem and $\rho(1)$ is a probability measure supported in $C_1$, we conclude that
$$
\rho(1)\perp\scrL^N\,.
$$ 

On the other hand, since $F_1$ is one-to-one and onto, for each $x\in\bR$, one has $\#F_1^{-1}(\{x\})=1$. In particular, for each $x\in C_1=F_1(K)$,
one has
$$
F_1^{-1}(\{x\})\subset(F'_1)^{-1}(\{0\})
$$
so that 
$$
\#(F_1^{-1}(\{x\})\cap (F'_1)^{-1}(\{0\}))=\#F_1^{-1}(\{x\})=1\,.
$$
In particular, for each $x\in C_1$, 
$$
(\rho^{in}\scrL^1)(F_1^{-1}(\{x\})\cap (F'_1)^{-1}(\{0\}))=0\,,
$$
which means that $x\notin A_1$. 

On the other hand, one has $\rho^{in}>0$ on $K$ and therefore a.e. on $Z_1=K\cup\{0,1\}$. By statement (b) in Theorem \ref{T-Atom}, one has
$$
\rho(1)(\{x\})=0\,.
$$
Hence $\rho(1)\perp\scrL^1$ and is diffuse, with $\rho(1)(C_1)=1$.

\subsection{Proof of the statements in Example \ref{E-HausDim}}

We begin with the following lemma, which is the key in understanding how the map $F_1$ acts on $[0,1]$. Obviously
$$
F_1(y)=y+U^{in}(y)=\left\{\ba{}&y&&\quad\hbox{ if }y\notin[0,1]\,,\\&\frac1\th\scrL^1(\Om(\th)\cap(0,y))&&\quad\hbox{ if }y\in(0,1)\,.\ea\right.
$$

\begin{Lem}\lb{L-F(a)} Consider the transformations 
$$
H:\,[0,1]\ni z\mapsto\th z\in[0,1]\,,\quad\hbox{ and }S:\,[0,1]\ni z\mapsto 1-z\in[0,1]\,.
$$
\smallskip
\noindent
a) One has $F_1\circ H=H\circ F_1$ and $F_1\circ S=S\circ F_1$ on $[0,1]$.

\noindent
b) The sequence of interval centers $a_{m,k}$ satisfies
$$
F_1(a_{m,k})=a_{m,k}\quad\hbox{ for all }m\ge 1\hbox{ and }k=1,\ldots,2^{m-1}\,.
$$
\end{Lem}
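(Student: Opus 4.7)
The plan is to exploit the self-similarity of $\Omega(\theta)$ under the contraction $H$ and the reflection $S$, together with the explicit total measure $\scrL^1(\Omega(\theta)) = \theta$. Before attacking (a) and (b) directly, I would establish the decomposition
\begin{equation*}
\Omega(\theta) = J_{1,1} \sqcup H(\Omega(\theta)) \sqcup S(H(\Omega(\theta)))
\end{equation*}
by checking the interval identities $H(J_{m,k}) = J_{m+1,k}$ for $1 \le k \le 2^{m-1}$ and $S(J_{m,k}) = J_{m,\,2^{m-1}+1-k}$. These follow from tracking centers ($\theta a_{m,k} = a_{m+1,k}$) and half-widths ($\theta \cdot \theta r_m = \theta r_{m+1}$, since $r_{m+1} = \theta r_m$). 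Disjointness of the three pieces is immediate from the inclusions $J_{1,1} \subset (\theta, 1-\theta)$, $H(\Omega(\theta)) \subset [0,\theta]$, $S(H(\Omega(\theta))) \subset [1-\theta, 1]$, and the decomposition implies $S(\Omega(\theta)) = \Omega(\theta)$. A geometric series gives $\scrL^1(\Omega(\theta)) = \sum_{m\ge 1}2^{m-1}(1-2\theta)\theta^m = \theta$.

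For $F_1 \circ H = H \circ F_1$: for $y \in [0,1]$, the interval $(0,\theta y)$ sits in $[0,\theta]$ and avoids both $J_{1,1}$ and $S(H(\Omega(\theta)))$, so
\begin{equation*}
\Omega(\theta) \cap (0,\theta y) = H(\Omega(\theta)) \cap (0,\theta y) = \theta\bigl(\Omega(\theta) \cap (0,y)\bigr);
\end{equation*}
taking $\scrL^1$ and dividing by $\theta$ yields $F_1(\theta y) = \theta F_1(y)$. For $F_1 \circ S = S \circ F_1$: the symmetry $S(\Omega(\theta)) = \Omega(\theta)$ gives $\scrL^1(\Omega(\theta) \cap (1-y,1)) = \scrL^1(\Omega(\theta) \cap (0,y))$, hence
\begin{equation*}
\scrL^1(\Omega(\theta) \cap (0,1-y)) = \theta - \scrL^1(\Omega(\theta) \cap (0,y)),
\end{equation*}
so $F_1(1-y) = 1 - F_1(y)$.

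Part (b) then follows by induction on $m$. In the base case $a_{1,1} = 1/2$, applying $F_1 \circ S = S \circ F_1$ at $y = 1/2$ forces $F_1(1/2) = 1 - F_1(1/2)$, so $F_1(a_{1,1}) = a_{1,1}$. For the inductive step, the tracking of centers from the first step shows that for $1 \le k \le 2^{m-1}$ one has $a_{m+1,k} = H(a_{m,k})$, while for $2^{m-1} < k \le 2^m$ one has $a_{m+1,k} = S(H(a_{m,\,2^m+1-k}))$ with $2^m+1-k \in \{1,\ldots,2^{m-1}\}$. Applying $F_1$, commuting past $H$ and $S$ via part (a), and invoking the inductive hypothesis at level $m$ gives $F_1(a_{m+1,k}) = a_{m+1,k}$.

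The main (modest) obstacle is the combinatorial bookkeeping required to verify the identities $H(J_{m,k}) = J_{m+1,k}$ and $S(J_{m,k}) = J_{m,\,2^{m-1}+1-k}$ coherently with the recursive construction: one must check that the scaling factor $\theta$ and the half-width factor $\theta r_m$ propagate correctly, and that the mirror indexing matches the enumeration chosen for the right-hand copies. Once this indexing is in place, the rest of the proof is a direct application of the commutation relations together with a transparent induction.
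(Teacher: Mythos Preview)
Your proposal is correct and follows essentially the same route as the paper: establish the self-similar decomposition of $\Omega(\theta)$ under $H$ and $S$ via the recursions $a_{m+1,k}=H(a_{m,k})$ and $a_{m+1,k}=S(a_{m+1,2^m+1-k})$, deduce the two commutation identities from $\Omega(\theta)\cap(0,\theta y)=H(\Omega(\theta)\cap(0,y))$ and $S(\Omega(\theta))=\Omega(\theta)$, and then run the induction on $m$ starting from $a_{1,1}=1/2=S(a_{1,1})$. Your inclusion of $J_{1,1}$ in the decomposition $\Omega(\theta)=J_{1,1}\sqcup H(\Omega(\theta))\sqcup S(H(\Omega(\theta)))$ is in fact more accurate than the paper's displayed formula (which omits $J_{1,1}$), though this has no effect on the argument since $J_{1,1}\subset(\theta,1-\theta)$ lies outside both $[0,\theta]$ and $[1-\theta,1]$.
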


\begin{proof}[Proof of Lemma \ref{L-F(a)}]

Obviously $[0,1]\setminus E_1=(\th,1-\th)=:I_{1,1}$ so that 
$$
a_{1,1}=\tfrac12\,.
$$
The interval centers $a_{m,k}$ satisfy the following induction relations
$$
\left\{
\ba
{}&a_{m+1,k}=H(a_{m,k})\,,&&\quad 1\le k\le 2^{m-1}\,,
\\
&a_{m+1,k}=S(a_{m+1,2^m+1-k})\,,&&\quad 2^{m-1}+1\le k\le 2^m\,.
\ea
\right.
$$
Therefore 
$$
\left\{
\ba
{}&I_{m+1,k}=H(I_{m,k})\,,&&\quad 1\le k\le 2^{m-1}\,,
\\
&I_{m+1,k}=S(I_{m+1,2^m+1-k})\,,&&\quad 2^{m-1}+1\le k\le 2^m\,,
\ea
\right.
$$
and
$$
\left\{
\ba
{}&J_{m+1,k}=H(J_{m,k})\,,&&\quad 1\le k\le 2^{m-1}\,,
\\
&J_{m+1,k}=S(J_{m+1,2^m+1-k})\,,&&\quad 2^{m-1}+1\le k\le 2^m\,.
\ea
\right.
$$
In particular
$$
\Om(\th)=H(\Om(\th))\cup S\circ H(\Om(\th))\,,\quad H(\Om(\th))\cap S\circ H(\Om(\th))=\varnothing\,.
$$

Thus, for each $z\in[0,1]$, one has
$$
\Om(\th)\cap(0,H(z))\subset[0,\th]\
$$
so that 
$$
\Om(\th)\cap(0,H(z))=H(\Om(\th))\cap(0,H(z))=H(\Om(\th)\cap(0,z))\,,
$$
and therefore
$$
\ba
F_1(H(z))=\frac1\th\scrL^1(\Om(\th)\cap(0,H(z)))&=\frac1\th\scrL^1(H(\Om(\th)\cap(0,z)))
\\
&=\scrL^1(\Om(\th)\cap(0,z))=\th F_1(z)=H(F_1(z))\,.
\ea
$$
Likewise, for each $z\in[0,1]$,
$$
\Om(\th)\cap(S(z),1)=\Om(\th)\cap S(0,z)=S(\Om(\th))\cap S((0,z))=S(\Om(\th)\cap(0,z))\,,
$$
so that
$$
\ba
F_1(S(z))&=\frac1\th\scrL^1(\Om(\th)\cap(0,S(z)))=\frac1\th\scrL^1(\Om(\th))-\frac1\th\scrL^1(\Om(\th)\cap[S(z),1))
\\
&=1-\frac1\th\scrL^1(\Om(\th)\cap(S(z),1))=1-\frac1\th\scrL^1(S(\Om(\th)\cap(0,z)))
\\
&=1-F_1(z)=S(F_1(z))\,.
\ea
$$
This proves statement (a).

As for statement (b), we first observe that $a_{1,1}=\tfrac12=S(\tfrac12)=S(a_{1,1})$. Therefore
$$
F(a_{1,1})=F(S(a_{1,1}))=1-F(a_{1,1})\hbox{ so that }F(a_{1,1})=\tfrac12=a_{1,1}\,.
$$
This observation, together with the induction relations on the sequence of $a_{m,k}$ and the commutation properties of $F_1$ with $H$ and $S$, implies
that each interval center $a_{m,k}$ is a fixed point of $F_1$.
\end{proof}

\subsection{Proof of statement (a) in Example \ref{E-HausDim}}

The vector field $U^{in}$ is Lipschitz continuous on $[0,1]$ (as the antiderivative of the bounded measurable function $\frac1\th\indc_\Om-1$). Its extension
by $0$ to the $\bR\setminus[0,1]$ is Lipschitz continuous on $\bR$ since $\scrL^1(\Om)=\th$.

The function $F_1:\bR\ni y\mapsto y+U^{in}(y)\in\bR$ is therefore Lipschitz continuous. It is nondecreasing on $\bR$ (as the antiderivative of the nonnegative
measurable function $\indc_{(-\infty,0)\cup(1,+\infty)}+\frac1\th\indc_{\Om(\th)}$). Since $\Om(\th)$ is an open subset of $(0,1)$, the map $F_1$ is differentiable 
on $(-\infty,0)\cup\Om(\th)\cup(1,+\infty)$ and one has
$$
F_1'(y)=\left\{\ba{}&1&&\quad\hbox{ if }y\in(-\infty,0)\cup(1,+\infty)\,,\\ &1/\th&&\quad\hbox{ if }y\in\Om(\th)\,.\ea\right.
$$
In particular, $F_1'>0$ on $(-\infty,0)\cup\Om(\th)\cup(1,+\infty)$. Since we already know that $F_1$ is nondecreasing on $\bR$, we infer that the map $F_1$ 
is increasing on $(-\infty,0)\cup\Om(\th)\cup(1,+\infty)$. This proves statement (a).

\subsection{Proof of statement (b) in Example \ref{E-HausDim}}

Observe that $F_1(a_{m,k})=a_{m,k}$ for all $m\ge 1$ and all $k=1,\ldots,2^{m-1}$, while $F_1'(y)=\frac1\th$ for all $y\in\Om(\th)$. Therefore 
$F_1(J_{m,k})=I_{m,k}$, and
$$
F_1(\Om(\th))=\bigcup_{1\le k\le 2^{m-1}\atop m\ge 1}I_{m,k}=[0,1]\setminus K(\th)\,.
$$
Since $F_1$ coincides with the identity map on $(-\infty,0)$ and on $(1,+\infty)$, we conclude
$$
F_1((-\infty,0)\cup\Om(\th)\cup(1,+\infty))=\bR\setminus K(\th)\,.
$$
Since $F_1$ is continuous and $F_1(y)\to\pm\infty$ as $y\to\pm\infty$, we conclude that $F_1$ is onto by the intermediate values theorem. Therefore
$$
F_1(\tilde K(\th))=K(\th)
$$
which concludes the proof of statement (b).

\subsection{Proof of statement (c) in Example \ref{E-HausDim}}
Set
$$
\cO(\th):=[0,1]\setminus K(\th)=\bigcup_{1\le k\le 2^{m-1}\atop m\ge 1}I_{m,k}
$$
so that
$$
\ba
\bigcup_{1\le k\le 2^{m-1}\atop m\ge 1}((a_{m,k}-r_m,a_{m,k}-\th r_m)\cup(a_{m,k}+\th r_m,a_{m,k}+r_m))&
\\
\subset\cO(\th)\setminus\Om(\th)\subset[0,1]\setminus\Om(\th)=\tilde K(\th)&\,.
\ea
$$
By Lemma \ref{L-F(a)}, one has $F_1(a_{m,k})=a_{m,k}$ for each $m\ge 1$ and each $k=1,\ldots,2^{m-1}$; besides, 
$$
y\in J_{m,k}\subset\Om(\th)\Rightarrow F'_1(y)=\frac1\th\,,\quad
\hbox{ while }
y\in I_{m,k}\setminus\overline{J_{m,k}}\subset\tilde K(\th)\Rightarrow F'_1(y)=0\,.
$$
Hence
$$
\left\{
\ba
{}&F'_1(y)=F'_1(a_{m,k}-\th r_m)=a_{m,k}-r_m\hbox{ for all }y\in(a_{m,k}-r_m,a_{m,k}-\th r_m)\,,
\\
&F'_1(y)=F'_1(a_{m,k}+\th r_m)=a_{m,k}+r_m\hbox{ for all }y\in(a_{m,k}+\th r_m,a_{m,k}+r_m)\,.
\ea
\right.
$$
Thus, for each $\phi\in C_c(\bR^N\times\bR^N)$
$$
\ba
\iint_{\bR^N\times\bR^N}\phi(x,\xi)\mu(1,dxd\xi)&=\iint_{\bR^N\times\bR^N}\phi(y+\xi,\xi)\mu^{in}(dxd\xi)
\\
&=\frac1{1-\th}\int_{\tilde K(\th)}\phi(F_1(y),U^{in}(y))dy
\\
&=\frac1{1-\th}\int_{\tilde K(\th)}\phi(F_1(y),F_1(y)-y)dy\,.
\ea
$$
If $\phi(x,\xi)\ge 0$ for all $x,\xi\in\bR^N$, one has
$$
\ba
\iint_{\bR^N\times\bR^N}\phi(x,\xi)\mu(1,dxd\xi)
\ge\tfrac1{1-\th}\sum_{m\ge 1}\sum_{k=1}^{2^{m-1}}\int_{a_{m,k}-r_m}^{a_{m,k}-\th r_m}\phi(F_1(y),F_1(y)-y)dy&
\\
+
\tfrac1{1-\th}\sum_{m\ge 1}\sum_{k=1}^{2^{m-1}}\int_{a_{m,k}+\th r_m}^{a_{m,k}+r_m}\phi(F_1(y),F_1(y)-y)dy&
\\
\ge\tfrac1{1-\th}\sum_{m\ge 1}\sum_{k=1}^{2^{m-1}}\int_{a_{m,k}-r_m}^{a_{m,k}-\th r_m}\phi(a_{m,k}-r_m,a_{m,k}-r_m-y)dy&
\\
+\tfrac1{1-\th}\sum_{m\ge 1}\sum_{k=1}^{2^{m-1}}\int_{a_{m,k}+\th r_m}^{a_{m,k}+r_m}\phi(a_{m,k}+r_m,a_{m,k}+r_m-y)dy&
\\
=\tfrac1{1-\th}\sum_{m\ge 1}\sum_{k=1}^{2^{m-1}}\left(\int_{-(1-\th)r_m}^{0}\phi(a_{m,k}-r_m,z)dz+ \int_{0}^{(1-\th)r_m}\phi(a_{m,k}+r_m,z)dz\right)&\,.
\ea
$$
Thus
$$
\mu(1)\ge\frac1{1-\th}\sum_{m\ge 1}\sum_{k=1}^{2^{m-1}}(\de_{a_{m,k}-r_m}\otimes\indc_{(-(1-\th)r_m,0)}+\de_{a_{m,k}+r_m}\otimes\indc_{(0,(1-\th)r_m)})\,.
$$

On the other hand
$$
\ba
\La\frac1{1-\th}\sum_{m\ge 1}\sum_{k=1}^{2^{m-1}}(\de_{a_{m,k}-r_m}\otimes\indc_{(-(1-\th)r_m,0)}+\de_{a_{m,k}+r_m}\otimes\indc_{(0,(1-\th)r_m)}),1\Ra&
\\
=\frac1{1-\th}\sum_{m\ge 1}\sum_{k=1}^{2^{m-1}}2(1-\th)r_m=\sum_{m\ge 1}\sum_{k=1}^{2^{m-1}}2r_m
	=\sum_{m\ge 1}\sum_{k=1}^{2^{m-1}}2\tfrac12(1-2\th)\th^{m-1}&
\\
=(1-2\th)\sum_{m\ge 1}\sum_{k=1}^{2^{m-1}}\th^{m-1}=(1-2\th)\sum_{m\ge 1}2^{m-1}\th^{m-1}=1&\,.
\ea
$$
Since
$$
\iint_{\bR^N\times\bR^N}\mu(1,dxd\xi)=1\,,
$$
the inequality above is in fact an equality, which is precisely statement (c).

\subsection{Proof of statement (d) in Example \ref{E-HausDim}}

Observe that
$$
\rho(1)=\Pi\#\mu(1)=\sum_{m\ge 1}r_m\sum_{k=1}^{2^{m-1}}(\de_{a_{m,k}-r_m}+\de_{a_{m,k}+r_m}\,,
$$
which gives the first equality in statement (d), since $r_m=\tfrac12(1-2\th)\th^{m-1}$. Thus 
$$
\rho(1)(\bR^N)=\mu(1)(\bR^N\times\bR^N)=\mu^{in}(\bR^N\times\bR^N)=\int_{\bR^N}\rho^{in}(y)dy=1
$$
and 
$$
\Supp(\rho(1))=\overline{\{a_{m,k}\pm r_m\,,|\,m\ge 1\,,\,\, k=1,\ldots,2^{m-1}\}}\subset K(\th)\,.
$$
Since $\scrL^1(K(\th))=0$, this implies that $\rho(1)\bot\scrL^1$.

It remains to prove that
$$
K(\th)\subset\overline{\{a_{m,k}\pm r_m\,,|\,m\ge 1\,,\,\, k=1,\ldots,2^{m-1}\}}\,.
$$
Let $z\in K(\th)$ and let $\eps>0$; pick $n\ge 1$ large enough so that $\th^n<\eps$. We recall that $K(\th)\subset E_n$ and that $E_n$ is the union of $2^n$ 
closed segments of length $\th^n$. The set of endpoints of these segments is $P_n\cup\{0,1\}$, where
$$
P_n:=\{a_{m,k}\pm r_m\,|\,m=1,\ldots,n\hbox{ and }k=1,\ldots,2^{m-1}\}\,.
$$
Hence 
$$
\Dist(z,P_n)\le\th^n<\eps\,,
$$
which shows that $P_n$ is dense in $K(\th)$, and concludes the proof.

\subsection{Proof of statement (e) in Example \ref{E-HausDim}}

For each $n\ge 1$, set
$$
E_n:=[0,1]\setminus\bigcup_{1\le k\le 2^{m-1}\atop 1\le m\le n}I_{m,k}\,.
$$
Then 
$$
E_n=\bigcup_{1\le k\le 2^n}[\a_{n,k},\b_{n,k}]
$$
with 
$$
0\le\a_{n,1}<\b_{n,1}<\ldots<\a_{n,2^n}<\b_{n,2^n}=1\,,\quad\b_{n,k}-\a_{n,k}=\th^n\,.
$$
Obviously
$$
\{\a_{n,k}|\,1<k\le 2^n\}\cup\{\b_{n,k}|\,1\le k<2^n\}=\{a_{m,k}\pm r_m\,|\,1\le k\le 2^{m-1}\,,\,\,1\le m\le n\}\,.
$$
By construction $F_1$ is differentiable on $I_{m,k}\setminus\overline{J_{m,k}}$, and one has
$$
F'_1=0\hbox{ on }I_{m,k}\setminus\overline{J_{m,k}}=(a_{m,k}-r_m,a_{m,k}-\th r_m)\cup(a_{m,k}+\th r_m,a_{m,k}+r_m)\,.
$$
An elementary computation shows that $r_m=\rho\th^{m-1}$, so that
$$
F'_1=0\hbox{ on }(\a_{n,k}-\rho\th^{n-1},\a_{n,k})\cup(\b_{n,k},\b_{n,k}+\rho\th^{n-1})\,.
$$

Let $x\in K(\th)\setminus\{0,1\}$. First, if $x\in\{\a_{n,k}|\,1<k\le 2^n\}\cup\{\b_{n,k}|\,1\le k<2^n\}$, then
$$
\ba
\{x\}=F_1((\a_{n,k}-\rho\th^{n-1},\a_{n,k}))\hbox{ if }x=\a_{n,k}\,,
\\
\{x\}=F_1((\b_{n,k},\b_{n,k}+\rho\th^{n-1}))\hbox{ if }x=\b_{n,k}\,,
\ea
$$
so that $x\in F_1(Z_1)$, i.e. $x$ is a critical value of $F_1$.

Next, assume that
$$
x\in K(\th)\setminus\{\a_{n,k},\b_{n,k}\,|\,1\le k\le 2^n\}\,.
$$
Since 
$$
\{\a_{n,k},\b_{n,k}\,|\,1\le k\le 2^n\,,\,\,n\ge 1\}\hbox{ is dense in }K(\th)
$$
and $\b_{n,k}-\a_{n,k}=\th^n\to 0$ as $n\to\infty$, there exists a sequence $(k_n)_{n\ge 1}$ such that $x\in(\a_{n,k_n},\b_{n,k_n})$.

Since $F_1$ is continuous  and $F_1(\a_{n,k_n})=\a_{n,k_n}$ while $F_1(\b_{n,k_n})=\b_{n,k_n}$, there exists $y\in(\a_{n,k_n},\b_{n,k_n})$ such that
$F_1(y)=x$.

Assume that $F_1$ is differentiable at $y$. For each $n\ge 1$, pick $\xi_n,\eta_n$ so that
$$
\xi_n,\eta_n\in(\a_{n,k_n}-\rho\th^{n-1},\a_{n,k_n})\hbox{ and }\eta_n-\xi_n=\rho\th^n\,.
$$
Then
$$
\ba
0=\frac{F_1(\eta_n)-F_1(\xi_n)}{\eta_n-\xi_n}&=\frac{F_1(\eta_n)-F_1(y)}{\eta_n-y}\frac{\eta_n-y}{\eta_n-\xi_n}
+\frac{F_1(y)-F_1(\xi_n)}{y-\xi_n}\frac{y-\xi_n}{\eta_n-\xi_n}
\\
&=(F'_1(y)+\om(\eta_n-y))\frac{\eta_n-y}{\eta_n-\xi_n}+(F'_1(y)+\om(y-\xi_n))\frac{y-\xi_n}{\eta_n-\xi_n}
\\
&=F'_1(y)+\om(\eta_n-y)\frac{\eta_n-y}{\eta_n-\xi_n}+\om(y-\xi_n)\frac{y-\xi_n}{\eta_n-\xi_n}
\ea
$$
where $\om(r)\to 0$ as $r\to 0$. Observe that
$$
\ba
\left|\frac{\eta_n-y}{\eta_n-\xi_n}\right|\le\frac{|\eta_n-y|}{\rho\th^n}\le\frac{\rho\th^{n-1}+\th^n}{\rho\th^n}\le\frac1\th+\frac1\rho\,,
\\
\left|\frac{y-\xi_n}{\eta_n-\xi_n}\right|\le\frac{|y-\xi_n|}{\rho\th^n}\le\frac{\rho\th^{n-1}+\th^n}{\rho\th^n}\le\frac1\th+\frac1\rho\,,
\ea
$$
so that
$$
\om(\eta_n-y)\frac{\eta_n-y}{\eta_n-\xi_n}+\om(y-\xi_n)\frac{y-\xi_n}{\eta_n-\xi_n}\to 0
$$
as $n\to\infty$. 

Thus, if $F_1$ is differentiable at $y$, then $F'_1(y)=0$, so that $x=F_1(y)\in F_1(Z_1)$. Otherwise $y\in E$, so that $x=F_1(y)\in F_1(E)$. 

In all cases, one has $x\in C_1$, so that $K(\th)\setminus\{0,1\}\subset C_1$. Since $F_1$ is differentiable and $F'_1>0$ on $(-\infty,0)\cup\Om(\th)\cup(1,\infty)$ 
while $F_1((-\infty,0)\cup\Om(\th)\cup(1,\infty))=\bR\setminus K(\th)$, one has also $C_1\subset K(\th)$.


\bigskip
\noindent
\textbf{Acknowledgement.} Peter  Markowich thanks the Fondation des Sciences Math\'e\-matiques de Paris for its support during the preparation of this paper.


\end{document}